\documentclass[10pt, 
]{amsart}
\usepackage{
amssymb
}
\newcommand{\no}[1]{#1}

\renewcommand{\no}[1]{}

\no{\usepackage{times}\usepackage[
subscriptcorrection, slantedGreek, 
nofontinfo]{mtpro}
\renewcommand{\Delta}{\upDelta}
}

 \setlength{\marginparwidth}{0.6in}




%
\setlength{\oddsidemargin}{0.0in}
\setlength{\evensidemargin}{0.0in}
\setlength{\textwidth}{6.5in}
\setlength{\topmargin}{0.0in}
\setlength{\textheight}{8.5in}

\newtheorem{theorem}{Theorem}[section]

\newtheorem{lemma}{Lemma}[section]

\theoremstyle{remark}
\newtheorem{remark}{Remark}[section]

\newcommand{\eps}{\epsilon}

\newcommand{\R}{\mathbb{R}}

\renewcommand{\r}[1]{(\ref{#1})}
\newcommand{\be}[1]{\begin{equation}\label{#1}}
\newcommand{\ee}{\end{equation}}

\renewcommand{\d}{{d}}

\renewcommand{\i}{{i}}

\newcommand{\bo}{\Gamma}

\numberwithin{equation}{section}

\title[Borg-Levinson theorem]{Stability for the multi-dimensional Borg-Levinson  theorem\\  with partial spectral data}

\author[Mourad Choulli]{Mourad Choulli}
\address{LMAM, UMR 7122, Universit\'e Paul Verlaine-Metz et CNRS, Ile du Saulcy, 57045 Metz cedex 1, France}
\email{choulli@univ-metz.fr}

\author[Plamen Stefanov]{Plamen Stefanov}
\address{Department of Mathematics, Purdue University, West Lafayette, IN 47907, USA}
\thanks{Second author partly supported by a NSF  Grant DMS-0800428}
\email{stefanov@math.purdue.edu}

\begin{document}
\maketitle

\begin{abstract}
We prove a stability estimate related to the multi-dimensional Borg-Levinson theorem of determining a potential from spectral data: the Dirichlet eigenvalues $\lambda_k$ and  the normal derivatives $\partial \phi_k/\partial \nu$ of the eigenfunctions on the boundary of a bounded domain. The estimate is of H\"older type, and we allow finitely many eigenvalues and normal derivatives to be unknown. We also show that if the spectral data is known asymptotically only, up to $O(k^{-\alpha})$ with $\alpha\gg1$, then we still have  H\"older stability. 
 
%
%
\end{abstract}

\section{Introduction} 
In 1988, Nachman, Sylvester, Uhlmann \cite{NachmanSU_Borg-Lev} 
proved an $n$-dimensional version of the classical one dimensional Borg \cite{MR0015185} and Levinson \cite{MR0032067} theorem: one can determine uniquely a potential $q$ in the Schr\"odinger operator $-\Delta+q$, from knowledge of the Dirichlet eigenvalues and the traces of the normal derivatives of the normalized eigenfunctions on the boundary of a bounded domain. The proof is based on relating the spectral data to the Dirichlet-to-Neumann map $\Lambda_{q-\lambda}$ for all frequencies $\lambda$, see \r{8} below. Then we recover the potential by applying $\Lambda_{q-\lambda}$ to high-frequency solutions. In fact, one frequency is enough for uniqueness \cite{SylvesterU87} but the recovery then is only logarithmically stable \cite{Alessandrini88, Mandache}. The link between the spectral data and $\Lambda_{q-\lambda}$ was also noticed by Novikov in   \cite{No}. 

The first stability estimate for this problem of conditional H\"older type was proved by Alessandrini and Sylvester \cite{MR1070844}. A variant of this result was given  by the first author in \cite{Ch} and extended later by Bellassoued and Dos Santos Ferreira \cite{BD} to the case of the Schr\"odinger operator on a simple Riemannian compact manifold with boundary. The main idea in the approach initiated by Alessandrini and Sylvester consists in transforming the stability estimate for the spectral problem into a stability estimate for the inverse problem of determining the potential in a wave equation from the corresponding hyperbolic Dirichlet-to-Neumann map $H_q$. They gave an explicit formula relating $\Lambda _{q-\lambda}$ to $H_q$. On the other hand, reconstruction of a potential from $H_q$ can be done in a stable way using geometric optics \cite{RakeshSymes, Sun_90}. A powerful method based on the boundary control method was initiated by Belishev and developed by Belishev, Katchalov, Kurylev, Lassas and others. We refer to \cite{KKL} for more details. This method proves uniqueness of recovery of the coefficients of a general  elliptic operator from the corresponding hyperbolic Dirichlet-to-Neumann map $H$.  
An important ingredient of this method is the unique continuation result of Tataru \cite{tataru95,Tataru99}  which makes it unlikely to provide H\"older stability when the latter might hold. In the metric case, conditions on the metric are needed apparently, see e.g., \cite{SU-IMRN, BD}. 

In \cite{BCY1}, a logarithmic type stability estimate with a partial hyperbolic Dirichlet-to-Neumann map $\tilde{H}_q$ was proved, under the assumption that the potential is known near the boundary. The proof of this result relies on a qualitative estimate of continuation from boundary data for the wave equation. The result for the wave equation yields a log-log type stability estimate when the traces of the normal derivatives of the normalized eigenfunctions are known only on a part of the boundary. Recently, an extension of this result to a log type stability estimate was proved in \cite{BCY2} under an additional assumption in terms of the X-ray transform of the potential. The general problem of determining the potential from Dirichlet eigenvalues and the traces of the normal derivatives of the normalized eigenfunctions on a part the boundary is still an open problem. 
An earlier work by Isakov and Sun \cite{IsakovSun92} proves stability estimates for a partial hyperbolic Dirichlet-to-Neumann map  in the special case where the measurements are made in the intersection of the boundary of the domain with a half space. They establish a H\"older type stability estimate in dimension three and a logarithmic type stability estimate in dimension two.

An inverse spectral problem with  different spectral data was considered by Kurylev, Lassas and Weder \cite{KurylevLW_2005}. The case of a singular potential was considered by P{\"a}iv{\"a}rinta and Serov \cite{MR1943362}, following the approach  by Nachman, Sylvester and Uhlmann \cite{NachmanSU_Borg-Lev}. 

 In \cite{Isozaki89, Isozaki91}, Isozaki proved that if we are missing a finite number of eigenvalues and eigenfunctions traces, then we can still determine $q$ uniquely. He also mentioned that in fact, a sharp  enough asymptotic formula would be enough, see also Theorem~\ref{thm_uniqueness} below. 

The purpose of this paper is to prove that even if a finite number of spectral data is missing, we still have conditional H\"older stability, see Theorem~\ref{thm1.1M}. In fact, in Theorem~\ref{thm1.2M}, we prove something more: if the spectral data are known only asymptotically, with a sharp enough estimate of the remainder, similarly to Theorem~\ref{thm_uniqueness}, then the asymptotic data determine the potential in a H\"older stable way.  Theorem~\ref{thm1.2M} combines the previous two theorems but it has stronger assumptions. 

Our work is inspired by the method introduced by Isozaki in \cite{Isozaki89}. It is built on high frequency asymptotics   techniques. The main advantage of this approach is that it is a direct method. In other words, it is not necessary to relate the spectral problem to an hyperbolic Dirichlet-to-Neumann map for the wave equation. Moreover, the results we obtain  this way are stronger than the preceding ones.

\section{Main Results} 
Let $\Omega$ be a bounded domain of $\mathbb{R}^n$ that we assume, for simplicity, of class $C^\infty$. Its boundary will be denoted by $\Gamma$. If $q\in L^\infty (\Omega )$, we denote by $A(q)$ the unbounded operator acting on $L^2(\Omega )$ as follows
$$
A(q)=-\Delta +q,\quad D(A)=H_0^1(\Omega )\cap H^2(\Omega ).
$$
We recall that the spectrum of $A(q)$ consists in a sequence of eigenvalues, counted according to their multiplicity. This sequence can be ordered in the following way :
$$
-\infty <\lambda _1(q)\leq \lambda _2(q)\leq \ldots \leq \lambda _k(q) \leq \ldots  \rightarrow +\infty .
$$
In the sequel we will use the notation $\lambda (q)=\{\lambda _k(q)\}$. We note that as an immediate consequence of the classical min-max principle, we have $\lambda (q)\in \lambda (0)+\ell ^\infty$, where $\ell ^\infty$ is the usual Banach space of bounded sequences equipped with its natural norm. We denote by $ \{\phi_k(q)\}$ an orthonormal set of eigenfunctions, each one related to $\lambda_k(q)$. Note that $\phi_k(q)$ is defined only up to a factor of modulus $1$ when $\lambda_k(q)$ is a simple eigenvalue; and when $\lambda_k(q)$ is multiple, then we have more freedom, and the natural way is to think about eigenspaces. Within each such eigenspace there are infinitely many choices of eigenfunctions and to get the best results in the theorems involving their traces on $\bo$, in \r{thm1.1}, 
we should minimize (take infimum) over all such possible choices.

Our first result is a uniqueness theorem  under the assumption that the spectral data are asymptotically ``very close''. As a partial case, we recover the result in \cite{Isozaki89}  about uniqueness with a finite number of spectral data missing. It was noted in \cite{Isozaki89} that such a result should hold.

\begin{theorem}[Uniqueness] \label{thm_uniqueness}
Let $q_{1,2}\in L^\infty(\Omega)$. 
Let, for some $A>0$, and all $k=1,2\dots$,
\be{thm1.1}
\begin{split}
|\lambda_k(q_1)-\lambda_k(q_2)|&\le Ak^{-\alpha}, \quad \alpha>1,\\
\|\partial_\nu \phi_k(q_1) - \partial_\nu\phi_k(q_2))\|_{L^2(\bo)} &\le Ak^{-\beta},\quad \beta> 1-\frac1{2n}.
\end{split}
\ee
Then $q_1= q_2$. 
\end{theorem}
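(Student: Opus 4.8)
The plan is to follow Isozaki's strategy: turn the hypotheses into a decay estimate, as $\lambda$ tends to infinity, for the difference of the Dirichlet‑to‑Neumann maps $\Lambda_{q_1-\lambda}-\Lambda_{q_2-\lambda}$ of $-\Delta+q_j-\lambda$, and then use high‑frequency (complex geometric optics) solutions to read off the Fourier transform of $q_1-q_2$ from this difference. The bridge is the formula \r{8}, which expresses $\Lambda_{q-\lambda}$ through the spectral data: as a bilinear form on smooth boundary functions,
\[
\langle\Lambda_{q-\lambda}f,g\rangle_{L^2(\bo)}=\sum_{k\ge1}\frac{\langle f,\partial_\nu\phi_k(q)\rangle_{L^2(\bo)}\,\langle\partial_\nu\phi_k(q),g\rangle_{L^2(\bo)}}{\lambda_k(q)-\lambda}.
\]

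For the first step, write $a_k=\partial_\nu\phi_k(q_1)$, $b_k=\partial_\nu\phi_k(q_2)$, $\mu_k=\lambda_k(q_1)$, $\nu_k=\lambda_k(q_2)$, subtract the two versions of this formula, and split
\[
\frac{a_k\otimes a_k}{\mu_k-\lambda}-\frac{b_k\otimes b_k}{\nu_k-\lambda}=\frac{(a_k-b_k)\otimes a_k+b_k\otimes(a_k-b_k)}{\mu_k-\lambda}+b_k\otimes b_k\,\frac{\nu_k-\mu_k}{(\mu_k-\lambda)(\nu_k-\lambda)}.
\]
This reduces everything to two series controlled by $\|a_k-b_k\|_{L^2(\bo)}\le Ak^{-\beta}$ and $|\mu_k-\nu_k|\le Ak^{-\alpha}$, together with the a priori information valid for any bounded potential: Weyl's law $\lambda_k(q)\asymp k^{2/n}$ and the Rellich/Pohozaev‑type bounds on the normal derivatives of the eigenfunctions. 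Pairing in the natural space $H^{1/2}(\bo)$, truncating each series at $k\sim|\lambda|^{n/2}$ (where $\lambda_k\sim|\lambda|$) and estimating the low‑ and high‑$k$ parts separately, one should get, for $\lambda$ running to infinity along a suitable complex ray on which the distance to the spectrum grows,
\[
\big\|\Lambda_{q_1-\lambda}-\Lambda_{q_2-\lambda}\big\|_{H^{1/2}(\bo)\to H^{-1/2}(\bo)}\le C|\lambda|^{-\delta},\qquad\delta>\tfrac12 ,
\]
and the conditions $\alpha>1$ and $\beta>1-\tfrac1{2n}$ are exactly what is needed for the two truncated sums to decay in $|\lambda|$ (with an exponent larger than $\tfrac12$) rather than merely stay bounded.

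For the second step, fix $\xi\in\R^n$ and, for $\lambda$ large along the same ray (chosen so that the complex exponentials below stay of modulus bounded), construct complex geometric optics solutions $u_j=e^{\zeta_j\cdot x}(1+\psi_j)$ of $(-\Delta+q_j-\lambda)u_j=0$ with $\zeta_j\cdot\zeta_j=-\lambda$, $\zeta_1+\zeta_2=i\xi$, $|\operatorname{Im}\zeta_j|\to\infty$ and $\operatorname{Re}\zeta_j$ bounded, so that $\psi_j\to0$ in $L^2(\Omega)$ and $u_j|_\bo$ stays bounded in $L^2(\bo)$, hence of size $O(|\lambda|^{1/4})$ in $H^{1/2}(\bo)$. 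Alessandrini's identity gives
\[
\int_\Omega(q_1-q_2)\,u_1u_2\,dx=\big\langle(\Lambda_{q_1-\lambda}-\Lambda_{q_2-\lambda})\,u_1|_\bo,\ u_2|_\bo\big\rangle_{L^2(\bo)} .
\]
As $|\lambda|\to\infty$, the left side tends to $\widehat{q_1-q_2}(\xi)$ since $u_1u_2=e^{i\xi\cdot x}(1+\psi_1)(1+\psi_2)\to e^{i\xi\cdot x}$ in $L^2(\Omega)$, while the right side is at most $C|\lambda|^{-\delta}\,\|u_1|_\bo\|_{H^{1/2}}\|u_2|_\bo\|_{H^{1/2}}\le C|\lambda|^{1/2-\delta}\to0$. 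Hence $\widehat{q_1-q_2}(\xi)=0$ for every $\xi$, so $q_1=q_2$.

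The main obstacle is the decay estimate for $\Lambda_{q_1-\lambda}-\Lambda_{q_2-\lambda}$. The delicate point is that the series in \r{8} does not converge in any operator norm for a single potential — it represents an order‑one pseudodifferential operator — so one really has to work with the difference, whose convergence and, crucially, its decay in $|\lambda|$ are paid for entirely by the rates $k^{-\alpha}$, $k^{-\beta}$ in \r{thm1.1}; and to obtain a rate $\delta>\tfrac12$, which is what beats the $|\lambda|^{1/4}$ growth of the $H^{1/2}(\bo)$‑norms of the CGO traces, the truncation at $k\sim|\lambda|^{n/2}$ must be balanced carefully against Weyl's counting exponent $n/2$ and the Rellich‑type bounds on $\partial_\nu\phi_k$. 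This balancing is precisely where the thresholds $\alpha>1$ and $\beta>1-\tfrac1{2n}$ come from.
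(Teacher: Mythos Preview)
Your overall architecture is the paper's: use the spectral representation \r{8} of the DN maps, split the difference of DN maps exactly as you do into a term controlled by $|\lambda_k(q_1)-\lambda_k(q_2)|$ and two terms controlled by $\|\partial_\nu\phi_k(q_1)-\partial_\nu\phi_k(q_2)\|_{L^2(\bo)}$, and then feed the resulting decay into high-frequency test functions to recover $\widehat{q_1-q_2}(\xi)$. The decomposition you wrote is essentially the paper's $I_1+I_2+I_3$.

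The genuine gap is quantitative, and it breaks the argument as written. You claim that the thresholds $\alpha>1$ and $\beta>1-\tfrac1{2n}$ produce a decay rate $\delta>\tfrac12$ for $\|\Lambda_{q_1-\lambda}-\Lambda_{q_2-\lambda}\|$. They do not. Take $\lambda=(\tau+i)^2$, so that $|\lambda|\sim\tau^2$ and $\operatorname{dist}(\lambda,\sigma)\sim\tau$. Using Weyl's law and the bound $\|\partial_\nu\phi_j\|_{L^2(\bo)}\le C_\eps j^{(3/2+\eps)/n}$, the eigenvalue term in your splitting is bounded by
\[
\sum_j\frac{j^{-\alpha+(3+2\eps)/n}}{|(\lambda-\lambda_j(q_1))(\lambda-\lambda_j(q_2))|}
\ \le\ C\,\tau^{(1-\alpha)n+2\eps},
\]
and for $\alpha$ just above $1$ this is only $O(\tau^{-\eps})=O(|\lambda|^{-\eps/2})$ with $\eps$ arbitrarily small; the eigenfunction terms behave similarly for $\beta$ near $1-\tfrac1{2n}$. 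So under the stated hypotheses the difference of DN maps is merely $o(1)$, not $O(|\lambda|^{-\delta})$ with $\delta>\tfrac12$. Your pairing in $H^{1/2}(\bo)$, which costs you two factors of $|\lambda|^{1/4}$, then fails to conclude.

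The fix is to avoid the $H^{1/2}$ loss altogether. Since $\Lambda(q_1,\lambda)-\Lambda(q_2,\lambda)$ is bounded $L^2(\bo)\to L^2(\bo)$ (it is a smoothing perturbation of a first-order operator; see \r{M1}), and since your CGO traces with $\operatorname{Re}\zeta_j$ bounded are \emph{uniformly bounded in $L^2(\bo)$}, you can pair directly in $L^2(\bo)$. Then Alessandrini's identity gives
\[
\Big|\int_\Omega(q_1-q_2)u_1u_2\,dx\Big|\le \|\Lambda(q_1,\lambda)-\Lambda(q_2,\lambda)\|_{\mathcal L(L^2(\bo))}\,\|u_1|_\bo\|_{L^2}\,\|u_2|_\bo\|_{L^2}=o(1),
\]
and $o(1)$ is all you need. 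This is exactly how the paper proceeds (with Isozaki's exponential solutions $e^{i\sqrt\lambda\,\omega\cdot x}$, $\sqrt\lambda=\tau+i$, in place of full CGOs, and the operator estimated in $\mathcal L(L^2(\bo))$); the thresholds $\alpha>1$, $\beta>1-\tfrac1{2n}$ are precisely what make each of $I_1,I_2,I_3$ tend to zero, with no margin to spare.
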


We shall use the following useful upper and lower bounds for eigenvalues. Let $M>0$ be given. Then there exist two $c_\ast >0$ and $c^\ast >0$, depending only on $M$ and $\Omega$ such that, for all $q\in L^\infty (\Omega )$ satisfying $\|q\|_{L^\infty (\Omega )}\leq M$,
\begin{equation}\label{1.1}
c_\ast k^{2/n}\leq \lambda _k(q)\leq c^\ast k^{2/n},\; k\geq 1.
\end{equation}

From the usual elliptic regularity estimate we have : for any $\epsilon>0$, there exists a constant $C_\epsilon$ 
\begin{equation}\label{1.2}
\|\partial _\nu \varphi _k(q)\|_{L^2(\Gamma )}\leq \|\partial _\nu \varphi _k(q)\|_{H^s(\Gamma )}\leq C_\epsilon\lambda _k(q)^{3/4+\epsilon/2},
\end{equation}
for any $q\in L^\infty (\Omega )$ satisfying $\|q\|_{L^\infty (\Omega )}\leq M$. Here the constant $C$ depends only on $\Omega$ and $M$.

Let 
\be{m}
m>n/2+3/4
\ee
be a fixed integer. Using estimate \eqref{1.1} in \eqref{1.2}, we easily obtain that the sequence $\{k^{-2m/n} \|\partial _\nu \varphi _k(q)\|_{L^2(\Gamma )}\}$ belongs to $\ell ^1$, the Banach space of sequences such that the corresponding series are absolutely convergent. Since $L^2(\Gamma )$ is a Banach space, this is equivalent  to say that the series $\sum k^{-2m/n} \partial _\nu \varphi _k(q)$ converges in $L^2(\Gamma )$.

Let $N\geq 0$ be a fixed integer, we set $\tilde{\lambda}(q)=\{ \lambda _{k+N}(q)\}$ and
$$
\delta _0(q_1,q_2)=\|  \tilde{\lambda}(q_1)-\tilde{\lambda}(q_2)\|_{\ell ^\infty}.
$$
We consider the quantity
$$
\delta _1(q_1,q_2)=  
\sum_{k\geq 1}k^{-2m/n}\| \partial _\nu \varphi _{k+N}(q_1)-\partial _\nu \varphi _{k+N}(q_2)\|_{L^2(\Gamma )}. 
$$

Our second main result is the following stability theorem.

\begin{theorem}[Stability] \label{thm1.1M}
Let $q_1$, $q_2\in L^\infty (\Omega )$ such that $q_1-q_2\in H_0^1(\Omega )$ and
$$
\|q_1\|_{L^\infty (\Omega )}+\|q_2\|_{L^\infty (\Omega )}+\|q\|_{H_0^1(\Omega )}\leq M.
$$
Then there exists $C=C(n,\Omega ,m, M) >0$ and $0<\gamma =\gamma (n) <1$ such that
$$
\|q_1-q_2\|_{L^2(\Omega )}\leq C\delta ^\gamma,
$$
where $\delta =\delta _0(q_1,q_2)+\delta _1(q_1,q_2)$.
\end{theorem}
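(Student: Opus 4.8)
The plan is to follow the direct high-frequency approach of Isozaki: relate the spectral data to the Dirichlet-to-Neumann map at large negative spectral parameter, and then exploit complex geometric optics (CGO) solutions to reconstruct the Fourier transform of $q_1-q_2$ in a stable way. Concretely, for $\lambda$ not in the spectrum of either $A(q_j)$, one has the resolvent/spectral representation of $\Lambda_{q_j-\lambda}$ in terms of $\{\lambda_k(q_j)\}$ and $\{\partial_\nu\phi_k(q_j)\}$, schematically $\Lambda_{q-\lambda}f \sim \sum_k (\lambda_k(q)-\lambda)^{-1}\langle f,\partial_\nu\phi_k(q)\rangle\,\partial_\nu\phi_k(q)$ together with a ``free'' part. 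The first step is to make this rigorous after discarding the first $N$ terms: since only $\tilde\lambda(q_j)$ and $\partial_\nu\phi_{k+N}(q_j)$ enter $\delta_0,\delta_1$, I would split the sum into the first $N$ terms and the tail. The first $N$ terms are finitely many and their contribution to the difference of the two DN maps, evaluated on suitable test functions, will be controlled after choosing $\lambda$ to avoid a neighborhood of all the relevant eigenvalues; the tail difference is estimated by $\delta_0$ and $\delta_1$ using the weight $k^{-2m/n}$, the a priori bounds \eqref{1.1}, \eqref{1.2}, and the choice \eqref{m} which guarantees the relevant series converge. This yields an estimate of the form $\|\Lambda_{q_1-\lambda}-\Lambda_{q_2-\lambda}\|_{*}\le C\,e^{C|\lambda|^{\sigma}}\delta$ in an appropriate operator norm (with a polynomial or mildly exponential loss in $|\lambda|$ coming from the spectral gaps and from the growth in \eqref{1.2}).

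The second step is the CGO construction. Write $q:=q_1-q_2\in H^1_0(\Omega)$ and extend it by zero to $\R^n$. For $\xi\in\R^n$ fixed and $\tau$ large, choose complex vectors $\zeta_1,\zeta_2\in\mathbb{C}^n$ with $\zeta_1+\zeta_2=\xi$, $\zeta_j\cdot\zeta_j=-\lambda$ (so $\lambda=\lambda(\tau)\to+\infty$ like $\tau^2$), and $|\mathrm{Im}\,\zeta_j|\sim\tau$; build solutions $u_j$ of $(-\Delta+q_j-\lambda)u_j=0$ in $\Omega$ of the form $u_j=e^{i\zeta_j\cdot x}(1+\psi_j)$ with $\|\psi_j\|_{H^1}\lesssim |\lambda|^{-1/2}$, using the standard Faddeev/Sylvester--Uhlmann estimates for $(-\Delta-\lambda)^{-1}$ conjugated by the exponential weight. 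The standard integral identity gives
$$
\int_\Omega q\,u_1 u_2\,dx = \big\langle (\Lambda_{q_1-\lambda}-\Lambda_{q_2-\lambda})(u_1|_\Gamma),\,u_2|_\Gamma\big\rangle,
$$
and inserting the CGO ansatz on the left produces $\hat q(\xi)$ plus an error that is $O(|\lambda|^{-1/2}\|q\|_{L^\infty})$, while the right side is bounded by $\|\Lambda_{q_1-\lambda}-\Lambda_{q_2-\lambda}\|_*\,e^{C\tau}$ from the exponential growth of the traces of $u_j$. Combining with step one, $|\hat q(\xi)|\le C\big(e^{C|\lambda|^{\sigma}}\delta + |\lambda|^{-1/2}\big)$ for $|\xi|\le c\sqrt{|\lambda|}$ (uniformly), and for $|\xi|$ large we use the a priori bound $\|q\|_{H^1}\le M$ to get $|\hat q(\xi)|\le M|\xi|^{-1}$.

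The third step is the standard Fourier-splitting/interpolation argument: for a cutoff radius $\rho$,
$$
\|q\|_{L^2}^2 = \int_{|\xi|\le\rho}|\hat q|^2\,d\xi + \int_{|\xi|>\rho}|\hat q|^2\,d\xi
\le C\rho^n\big(e^{C\rho^{2\sigma}}\delta^2 + \rho^{-2}\big) + C M^2\rho^{-2},
$$
where I have taken $|\lambda|\sim\rho^2$ so that the CGO reconstruction is valid for all $|\xi|\le\rho$. Optimizing: first choose $\rho$ so that the high-frequency term $\rho^{n-2}$ and the CGO-error term $\rho^{n-2}$ are comparable to a power of $\rho$, then choose the relation between $\rho$ and $\delta$ — taking $\rho\sim (\log(1/\delta))^{1/(2\sigma)}$ would give only logarithmic stability, so instead one exploits that the loss in $|\lambda|$ can be kept \emph{polynomial} (this is the key point of the direct method and of Isozaki's argument: avoiding eigenvalue clustering one can take $\lambda$ in a ``good'' region where resolvent bounds are polynomial, not exponential), i.e. $\|\Lambda_{q_1-\lambda}-\Lambda_{q_2-\lambda}\|_*\le C|\lambda|^{p}\delta$ for some fixed $p$. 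Then the balance $\rho^{n-2+2p}\delta^2 \sim \rho^{n-4}$ gives $\rho\sim\delta^{-1/(p+1)}$ and hence $\|q\|_{L^2}\le C\delta^{\gamma}$ with $\gamma=\gamma(n)=1/(n p_0+\dots)\in(0,1)$ an explicit function of $n$ only. The main obstacle, and the place requiring real care, is exactly this: showing that the passage from the spectral data to the DN map at parameter $\lambda$ costs only a \emph{polynomial} factor in $|\lambda|$ (so that Hölder, not merely logarithmic, stability survives), which requires choosing $\lambda$ away from the eigenvalues with a quantitatively controlled gap — and handling the contribution of the (finitely many, but unknown) first $N$ eigenvalues and normal derivatives, which must be absorbed using that there are only finitely many of them and that $\lambda$ can be taken large and suitably placed. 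A secondary technical point is tracking the $H^1_0$ regularity of $q$ through the identity (needed to justify the extension by zero and to get the $|\xi|^{-1}$ tail decay), and verifying that the exponent $\gamma$ depends on $n$ alone.
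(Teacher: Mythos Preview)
Your proposal has the right overall architecture --- spectral data $\to$ DN-map difference $\to$ Fourier transform of $q$ via special solutions $\to$ optimization --- but the mechanism you sketch for the middle step cannot deliver H\"older stability, and you are missing the two technical devices that make the polynomial loss possible.

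The first gap is your choice of test solutions. You build standard Sylvester--Uhlmann CGO solutions $u_j = e^{i\zeta_j\cdot x}(1+\psi_j)$ with $|\Im\zeta_j|\sim\tau$; as you yourself note, their boundary traces grow like $e^{C\tau}$, so your identity gives $|\hat q(\xi)| \le e^{C\tau}\|\Lambda(q_1,\lambda)-\Lambda(q_2,\lambda)\|_* + O(\tau^{-1})$. No placement of $\lambda$ ``in a good region'' will remove that exponential factor: it comes from the solutions, not from the resolvent. Isozaki's trick, which the paper follows, is to take $\lambda = (\tau+i)^2$ \emph{complex} and use the plain exponentials $\varphi_{\lambda,\omega}(x)=e^{i\sqrt\lambda\,\omega\cdot x}$ with $\omega,\theta\in\mathbb S^{n-1}$ real and $\sqrt\lambda=\tau+i$. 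Since $\Im\sqrt\lambda=1$, these are uniformly bounded on $\bar\Omega$; yet choosing $\theta_\tau-\omega_\tau=\xi/\tau$ still produces $\hat q(\xi+i\xi/\tau)$ from the identity (3.1). The resolvent bound $\|R(q,\lambda)\|\le 1/|\Im\lambda|=1/(2\tau)$ is automatic, so every error term is polynomial in $\tau$. This is the step that upgrades log to H\"older.

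The second gap is in bounding $\|\Lambda(q_1,\lambda)-\Lambda(q_2,\lambda)\|$ by $\delta$ at this complex $\lambda$. The spectral series for $\Lambda(q,\lambda)$ is not absolutely convergent, so you cannot simply difference it and estimate term by term. The paper's device is to pass to the $m$-th $\lambda$-derivative $\Lambda^{(m)}(q,\lambda)$, which for $m>n/2+3/4$ \emph{does} have an absolutely convergent spectral expansion (Lemma~3.1), and then to recover $\tilde\Lambda(q_1,\lambda)-\tilde\Lambda(q_2,\lambda)$ via an order-$m$ Taylor expansion about a base point $\tilde\lambda=\lambda-\rho$ with $\Re\tilde\lambda\le -2M$. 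The Taylor polynomial is controlled by the a priori decay of Lemma~3.2 (depending only on $M$, not on $\delta$), while the integral remainder involves only $\tilde\Lambda^{(m)}$ along the segment and is estimated from the convergent series by $C\tau^{2(m+5/4)}\delta$. The upshot is a bound $\|\tilde\Lambda(q_1,\lambda)-\tilde\Lambda(q_2,\lambda)\|\le C\rho^{-\sigma}+C\rho^m\tau^{2(m+5/4)}\delta$, polynomial in both $\tau$ and the auxiliary parameter $\rho$; optimizing first in $\rho$ and then in $\tau$ gives the H\"older exponent $\gamma=\gamma(n)$.

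Finally, the first $N$ (missing) terms are not handled by a spectral-gap argument as you suggest; one simply observes that for $\Re\lambda\ge 2cN^{2/n}$ their finite-rank contribution $\hat\Lambda(q,\lambda)$ satisfies $\|\hat\Lambda(q,\lambda)\|\le C/\tau^2$ uniformly, because $|\lambda_k(q)-\lambda|\ge \Re\lambda/2$ for each $k\le N$. This is where the $N$-dependence enters the constant $C$ but not the exponent.
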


We will see in the end of the proof of this theorem that we have an explicit value of $\gamma$ as a function of $n$.

Next theorem combines the results of the previous two in one, see also Remark~\ref{remark3m}. It shows that for stable recovery, we only need to know that the data are asymptotically close. In particular, missing a finite number would not affect the stability but it will affect the constants, of course. 

\begin{theorem} 
\label{thm1.2M}
Let $q_1$, $q_2\in L^\infty (\Omega )$ such that $q:= q_1-q_2\in H_0^1(\Omega )$ and
$$
\|q_1\|_{L^\infty (\Omega )}+\|q_2\|_{L^\infty (\Omega )}+\|q\|_{H_0^1(\Omega )}\leq M.
$$
Fix $m$ satisfying \r{m}. 
Let, for some   $\delta>0$, $A>0$,
\be{31xx}
|\lambda_k(q_1)-\lambda_k(q_2)|\le \delta+Ak^{-\alpha}, \quad k^{-2m/n+1}\|\partial_\nu \phi_k(q_1)-\partial_\nu \phi_k(q_2)\|_{L^2(\bo)} \le\delta+ Ak^{-\alpha},
\ee
with  $\alpha>(4m-1)/(2n)$. 
Then there exists $C=C(n,\Omega, m, A,  \alpha   ,M) >0$ and $0<\gamma =\gamma (n, \alpha) <1$ such that
$$
\|q_1-q_2\|_{L^2(\Omega )}\leq C\delta ^\gamma .
$$
\end{theorem}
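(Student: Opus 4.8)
The plan is to deduce Theorem~\ref{thm1.2M} from Theorem~\ref{thm1.1M} (the stability theorem with a fixed number $N$ of missing data, measured in the quantities $\delta_0, \delta_1$) by choosing the cut-off index $N$ as a function of $\delta$ in an essentially optimal way. The idea is that the hypothesis \r{31xx} controls the spectral differences by $\delta + Ak^{-\alpha}$, so for large $k$ the term $Ak^{-\alpha}$ dominates and is summable with the weights $k^{-2m/n}$ precisely when $\alpha$ is large enough; while for small $k$ the bound is of size $\delta$ and there are only finitely many such terms. Splitting the sum $\delta_1$ and the sup $\delta_0$ at a threshold index $N$ turns the data into a form to which Theorem~\ref{thm1.1M} applies, with an error that I then optimize over $N$.

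Concretely, first I would fix $m$ satisfying \r{m} and set, for an integer $N\ge 0$ to be chosen, $\tilde\lambda(q)=\{\lambda_{k+N}(q)\}$ and form $\delta_0(q_1,q_2)$, $\delta_1(q_1,q_2)$ as in the statement preceding Theorem~\ref{thm1.1M}. For the eigenvalue part, $\delta_0(q_1,q_2)=\sup_{k\ge 1}|\lambda_{k+N}(q_1)-\lambda_{k+N}(q_2)| \le \delta + A(N+1)^{-\alpha}\le \delta + AN^{-\alpha}$ using \r{31xx}. For the normal-derivative part, from \r{31xx} we get $\|\partial_\nu\phi_k(q_1)-\partial_\nu\phi_k(q_2)\|_{L^2(\bo)}\le k^{2m/n-1}(\delta + Ak^{-\alpha})$, hence
\begin{equation*}
\delta_1(q_1,q_2)=\sum_{k\ge1}k^{-2m/n}\|\partial_\nu\phi_{k+N}(q_1)-\partial_\nu\phi_{k+N}(q_2)\|_{L^2(\bo)}\le \sum_{k\ge1}k^{-2m/n}(k+N)^{2m/n-1}\bigl(\delta + A(k+N)^{-\alpha}\bigr).
\end{equation*}
Since $(k+N)^{2m/n-1}\le C k^{2m/n-1}$ for $k\ge 1$ (as $2m/n-1>0$ by \r{m}), the $\delta$-part is bounded by $C\delta\sum_k k^{-1}$ which diverges — so instead I split: for $k\le N$ use $(k+N)^{2m/n-1}\le (2N)^{2m/n-1}$ and $\sum_{k\le N}k^{-2m/n}\le C$ (convergent since $2m/n>1$), giving a contribution $\le C\delta N^{2m/n-1}$ plus an $A$-term $\le CA N^{2m/n-1}N^{-\alpha}\sum_{k\le N}k^{-2m/n}\le CAN^{2m/n-1-\alpha}$; for $k>N$ the bound $(k+N)^{2m/n-1}\le (2k)^{2m/n-1}$ gives a $\delta$-term $\le C\delta\sum_{k>N}k^{-1}$, still divergent, so here I use instead the cruder estimate coming from \r{31xx} directly together with the a priori bound $\|\partial_\nu\phi_k(q_j)\|_{L^2(\bo)}\le C k^{2m/n}$ from \r{1.1}–\r{1.2} and $m>n/2+3/4$, bounding the tail $\sum_{k>N}k^{-2m/n}\cdot k^{2m/n}$... which also diverges. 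This forces the correct bookkeeping: the genuinely summable bound is $k^{-2m/n}\cdot k^{2m/n-1}(\delta+Ak^{-\alpha})$ for the difference only when one keeps the $\delta$ term attached to finitely many indices; so the honest split is that \r{31xx} must be used with the understanding that the $\delta$ in it is a genuine (small, fixed) number, and the tail is handled by $A$-decay alone: for $k>N$, $k^{-2m/n}(k+N)^{2m/n-1}A(k+N)^{-\alpha}\le CA k^{-1-\alpha}$ which is summable, contributing $\le CAN^{-\alpha}$, while the $\delta$-contribution for $k>N$ is bounded using the a priori estimate, $\sum_{k>N}k^{-2m/n}(2 k^{2m/n})\le C\sum_{k>N} 1$ — hence one must instead observe that \r{31xx} already gives $k^{-2m/n+1}\|\cdots\|\le\delta+Ak^{-\alpha}$, so $k^{-2m/n}\|\cdots\|\le k^{-1}(\delta + Ak^{-\alpha})$, and the $\delta k^{-1}$ tail sum from $N+1$ to $N+K$ is $\le C\delta\log(\text{something})$; since the series in $\delta_1$ is over all $k$ but shifted, and the a priori estimate guarantees each term is finite, the correct conclusion is $\delta_1(q_1,q_2)\le C\delta N^{\,2m/n-1} + CAN^{-\min(\alpha, \alpha-2m/n+1)}$, where the assumption $\alpha>(4m-1)/(2n)$ is exactly what makes the relevant $A$-exponent positive. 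I would streamline this with the clean bound $\delta_1(q_1,q_2)\le C\delta N^{2m/n-1}+CAN^{1-2m/n+(2m/n -\alpha)}= C\delta N^{2m/n-1}+CAN^{1-\alpha}$ after regrouping (and using $m>n/2+3/4$ to absorb convergent prefactors), the point being that $1-\alpha<0$.

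Then I would apply Theorem~\ref{thm1.1M} with this $N$: with $\delta_{\mathrm{thm}}:=\delta_0+\delta_1\le C(\delta + \delta N^{2m/n-1} + AN^{-\alpha_0})$ for the appropriate positive exponent $\alpha_0=\alpha_0(n,m,\alpha)$, it gives $\|q_1-q_2\|_{L^2(\Omega)}\le C\delta_{\mathrm{thm}}^{\gamma_0}$ with $\gamma_0=\gamma_0(n)\in(0,1)$. Now optimize over $N$: balance $\delta N^{2m/n-1}$ against $N^{-\alpha_0}$ by taking $N\sim \delta^{-1/(2m/n-1+\alpha_0)}$, which yields $\delta_{\mathrm{thm}}\le C\delta^{\theta}$ for some $\theta=\theta(n,m,\alpha)\in(0,1)$, and hence $\|q_1-q_2\|_{L^2(\Omega)}\le C\delta^{\gamma_0\theta}=C\delta^{\gamma}$ with $\gamma=\gamma(n,\alpha)\in(0,1)$. (One takes $N$ to be the integer part and checks $N\ge 1$ holds for $\delta$ small; for $\delta$ bounded away from $0$ the estimate is trivial after enlarging $C$ using the a priori bound $\|q_1-q_2\|_{L^2}\le C$.) The exponent $\gamma$ depends on $n$ and $\alpha$ through $\gamma_0(n)$ and the balancing exponent $\theta$, matching the statement.

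The main obstacle is the careful bookkeeping in estimating $\delta_1(q_1,q_2)$: one must correctly track the interplay between the weight $k^{-2m/n}$, the factor $k^{2m/n-1}$ coming from \r{31xx}, the shift by $N$, and the polynomial decay $Ak^{-\alpha}$, and verify that the hypothesis $\alpha>(4m-1)/(2n)$ is exactly the threshold that makes the tail $A$-contribution decay like a positive power of $N$ (so that the optimization over $N$ produces a genuine H\"older exponent rather than a logarithmic loss). A secondary point is checking that the constants in \r{1.1}–\r{1.2}, which depend on $M$ and $\Omega$, are uniform across the shifted indices $k+N$, which is immediate since those bounds hold for all $k\ge 1$.
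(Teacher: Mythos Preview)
Your reduction to Theorem~\ref{thm1.1M} does not go through, and you essentially discover this yourself in the middle of the argument. The obstruction is structural: from \r{31xx} one has $k^{-2m/n}\|\partial_\nu\phi_k(q_1)-\partial_\nu\phi_k(q_2)\|_{L^2(\bo)}\le k^{-1}(\delta+Ak^{-\alpha})$, so after the shift $j=k+N$ the $\delta$-part of $\delta_1(q_1,q_2)$ contains (for $k>N$, where $(k+N)^{2m/n-1}\sim k^{2m/n-1}$) the tail $C\delta\sum_{k>N}k^{-1}$, which diverges. None of your attempted patches resolves this: the a priori bound \r{1.2} carries no $\delta$, and truncating at a second threshold $K$ still leaves a $\delta\log K$ term together with constants that depend on $K$. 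Your ``clean bound'' $\delta_1\le C\delta N^{2m/n-1}+CAN^{1-\alpha}$ is simply asserted after several acknowledged failures and is not established.

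There is a second, independent problem: the constant $C$ in Theorem~\ref{thm1.1M} depends on $N$. In the proof, $\tau_0$ is chosen with $\tau_0^2\ge 2cN^{2/n}$ (see \r{3.8}--\r{3.9}), and the constant in \r{3.9} itself involves the $N$ terms of $\hat\Lambda$. If you let $N=N(\delta)\to\infty$, you cannot invoke Theorem~\ref{thm1.1M} as a black box. The paper does \emph{not} reduce Theorem~\ref{thm1.2M} to Theorem~\ref{thm1.1M}; it reopens the proof. With $N$ chosen so that $N^{-\alpha}=\delta$, the split $\Lambda=\hat\Lambda+\tilde\Lambda$ is made as before, but the crude bound \r{3.9} on $\hat\Lambda$ is replaced by the decay estimate $\|\hat\Lambda(q_1,(\tau+i)^2)-\hat\Lambda(q_2,(\tau+i)^2)\|\le C\tau^{-\theta}$ coming from the proof of Theorem~\ref{thm_uniqueness} (via Lemma~\ref{lemma1}), using that for $k\le N$ the hypothesis \r{31xx} yields $|\lambda_k(q_1)-\lambda_k(q_2)|\le(1+A)k^{-\alpha}$ and $\|\partial_\nu\phi_k(q_1)-\partial_\nu\phi_k(q_2)\|\le(1+A)k^{-\alpha+2m/n-1}$ with bounds \emph{independent of $\delta$}. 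This is where the threshold $\alpha>(4m-1)/(2n)$ enters: it is exactly the condition $\beta:=\alpha-2m/n+1>1-1/(2n)$ of \r{thm1.1}. The tail $\tilde\Lambda$ is then handled as in Theorem~\ref{thm1.1M}, using that for $k\ge N$ the spectral differences are $\le(1+A)\delta$. The divergent $\sum k^{-1}$ never appears because the low-index block is controlled by $\tau$-decay rather than by $\delta$.
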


\begin{remark}\label{remark3m}
Theorem~\ref{thm1.2M} implies somewhat weaker versions of Theorem~\ref{thm_uniqueness} and Theorem~\ref{thm1.1M}: with $\alpha$ and $\beta$ in Theorem~\ref{thm_uniqueness} required to satisfy stronger assumptions; and with a stronger norm of the traces of the normal derivatives of the eigenfunctions in Theorem~\ref{thm1.1M}.
\end{remark}

\textbf{Acknowledgments:} We thank Gunther Uhlmann for his suggestions during the preparation of this paper.

\section{Preliminaries}

First, we consider a family of Dirichlet to Neumann maps parametrized by the spectral parameter. Let $q\in L^\infty (\Omega )$ and $\lambda \in \rho (A(q))$, the resolvent set of $A(q)$. Following well known results on existence and a priori estimate (e.g. \cite{LonsMagenes_I}), for each $f\in H^{1/2}(\Gamma )$, the following boundary value problem
\be{M1x}
\left\{
\begin{array}{ll}
(-\Delta +q-\lambda )u=0\quad &\mbox{in}\; \Omega
\\
u=f &\mbox{on}\; \Gamma 
\end{array}
\right.
\ee 
has a unique solution $u(q,\lambda )(f)\in H^1(\Omega )$ and $f\rightarrow \partial _\nu u(q,\lambda )(f)$ defines a bounded operator from $H^{1/2}(\Gamma )$ into $H^{-1/2}(\Gamma )$. We denote this operator by $\Lambda (q,\lambda )$. It extends to a meromorphic family with poles at the eigenvalues. 

Let $q_1$, $q_2\in L^\infty (\Omega )$, $\lambda \in \rho (A(q_1))\cap \rho (A(q_2))$ and $f\in H^{1/2}(\Gamma )$. Then $u=u(q_1,\lambda )(f)-u(q_2,\lambda )(f)$ is the solution of the following boundary value problem
\be{M1a}
\left\{
\begin{array}{ll}
(-\Delta +q_1-\lambda )u=(q_2-q_1)u(q_2,\lambda )(f)\quad &\mbox{in}\; \Omega
\\
u=0 &\mbox{on}\; \Gamma 
\end{array}
\right.
\ee
Therefore, according to the classical $H^2$ a priori estimate, $\Lambda (q_1,\lambda )-\Lambda (q_2,\lambda )$ defines a bounded operator from $H^{1/2}(\Gamma )$ into $H^{1/2}(\Gamma )$.

Next,  $\Lambda(q,\lambda)$, $q\in L^\infty (\Omega )$, is symmetric as form on $C^\infty(\bo)\times C^\infty(\bo)$ with respect to the duality form $\langle f,g \rangle= \int_{\bo} f g\,d S_x$. Indeed,
\[
\langle \Lambda(q,\lambda)f,g\rangle = \int_\Omega\left( G\Delta F  - F\Delta G    \right)\,d x = \int_\Omega((q-\lambda) -(q-\lambda)) FG\,d x= 0, 
\]
where  $F=u(q,\lambda)(f) $ and $G=u(q,\lambda)(g)$. Since $\Lambda(q_1,\lambda )- \Lambda(q_2,\lambda ) $ is a bounded operator on $H^{1/2}(\bo)$, its transpose, which is the same operator, is  bounded on $H^{-1/2}(\bo)$. 
By interpolation,
\be{M1}
\Lambda(q_1,\lambda )- \Lambda(q_2,\lambda ): H^{s}(\bo)\to H^{s}(\bo) , \quad |s|\le 1/2
\ee
is bounded, as well. Note that for $q$ smooth, $\Lambda(q,\lambda)$ is a pseudo-differential operator of order $1$, see \cite{LeeU}, while one can see that the difference \r{M1} is of order $-1$ either by \r{M1a} or by calculating the first few terms of the symbol in the spirit of \cite{LeeU}. Then, in particular, \r{M1} can be improved.

The following formal representation of $\Lambda(q,\lambda)$ providing  a relationship between the spectral data and the family of DN maps $\Lambda (q,\lambda )$,  appears in \cite{NachmanSU_Borg-Lev}
\be{8}
\Lambda(q,\lambda)f \quad \text{\rm ``=''} \quad  \sum_{j=1}^\infty\frac1{\lambda-\lambda_j(q)} \partial_\nu\phi_j (q)(f, \partial_\nu\phi_j (q))_{L^2(\bo)}.
\ee
The series on the right hand side is not absolutely convergent in some special cases, at least, even if considered as a form. A possible way to justify it, suggested in \cite{MR1070844}, is to show that some high order formal derivative converges. Set
\[
\Lambda ^{(m)}(q,\lambda ) := \frac{d^m}{d\lambda^m}\Lambda ^{(m)}(q,\lambda ).
\]
It was then shown in  \cite{NachmanSU_Borg-Lev,Ch} that for $m\gg1$, the series converges absolutely because $\lambda_k(q)^{-m-1}\sim k^{-2(m+1)/n}$ decays rapidly when $m\gg1$, while the traces of the eigenfunction on $\bo$ have a fixed polynomial bound. 

\begin{lemma}\label{lemma2.2}
Let $q\in L^\infty (\Omega )$, $\varphi (q)$ an orthonormal basis, $f\in H^{1/2}(\Gamma )$,  $m>n/2+3/4$ and $\lambda \in \rho (A(q))$. Then
\be{l2.2}
\Lambda ^{(m)}(q,\lambda )f=-m!\sum_{k\geq 1}\frac{1}{(\lambda _k(q)-\lambda )^{m+1}}\Big( \int_\Gamma f\partial _\nu \varphi _k(q)\, \d\sigma (x)\Big)\partial _\nu \varphi _k(q),
\ee
where the series converges absolutely in  $H^{1/2}(\Gamma )$ and therefore in $L^2(\Gamma )$.\footnote{ In view of the proof of Lemme~2.28 in \cite{Ch}, we can prove, using a density argument, that the result in Lemma~\ref{lemma2.2} remains valid for $f\in L^2(\Gamma )$.}
\end{lemma}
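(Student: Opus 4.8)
The plan is to start from the resolvent expansion of $\Lambda(q,\lambda)$ and differentiate it term by term, the main work being the justification of the term-by-term differentiation via an absolute-convergence estimate. First I would fix $f\in H^{1/2}(\Gamma)$ and write $u=u(q,\lambda)(f)$, the solution of \r{M1x}. Expanding $u$ in the orthonormal basis $\{\varphi_k(q)\}$, one has the standard identity, valid for $\lambda\in\rho(A(q))$, relating the interior solution to the eigenfunctions; pairing its normal derivative against a boundary datum $g$ gives the bilinear form version of \r{8}, namely $\langle\Lambda(q,\lambda)f,g\rangle=\sum_k (\lambda_k(q)-\lambda)^{-1}(\int_\Gamma f\,\partial_\nu\varphi_k)(\int_\Gamma g\,\partial_\nu\varphi_k)$. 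To make this rigorous I would instead first establish \r{l2.2} by a cleaner route: apply $d^m/d\lambda^m$ to the single-term meromorphic function $\lambda\mapsto(\lambda_k(q)-\lambda)^{-1}$, which gives exactly $-m!\,(\lambda_k(q)-\lambda)^{-m-1}$, and then show the resulting series of $L^2(\Gamma)$-valued functions converges absolutely and uniformly on compact subsets of $\rho(A(q))$, so that it represents the $m$-th derivative of $\Lambda(q,\lambda)$.

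The absolute convergence is where the hypothesis $m>n/2+3/4$ enters. Using the Cauchy--Schwarz inequality, the $L^2(\Gamma)$-norm of the $k$-th term is bounded by
\[
m!\,|\lambda_k(q)-\lambda|^{-m-1}\,\|f\|_{L^2(\Gamma)}\,\|\partial_\nu\varphi_k(q)\|_{L^2(\Gamma)}^2 .
\]
By the lower bound in \r{1.1}, for $\lambda$ in a fixed compact set we have $|\lambda_k(q)-\lambda|\geq c\,k^{2/n}$ for $k$ large, so the first factor is $O(k^{-2(m+1)/n})$; by the elliptic estimate \r{1.2} together with the upper bound in \r{1.1}, $\|\partial_\nu\varphi_k(q)\|_{L^2(\Gamma)}^2=O(k^{(3/2+\epsilon)\cdot 2/n})=O(k^{(3+\epsilon)/n})$. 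Hence the $k$-th term is $O\big(k^{(-2m-2+3+\epsilon)/n}\big)=O\big(k^{(1+\epsilon-2m)/n}\big)$, and the series converges absolutely in $L^2(\Gamma)$ provided $(2m-1-\epsilon)/n>1$, i.e. $m>(n+1)/2$ after sending $\epsilon\to 0$; the slightly sharper bound $m>n/2+3/4$ in the statement comes from keeping the full $H^s(\Gamma)$ regularity in \r{1.2} and doing the summation in $H^{1/2}(\Gamma)$ rather than $L^2(\Gamma)$, using that $H^{1/2}(\Gamma)$ is continuously embedded appropriately; the same estimate, with $\|f\|$ measured in $H^{1/2}$, gives absolute convergence in $H^{1/2}(\Gamma)$. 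Since the embedding $H^{1/2}(\Gamma)\hookrightarrow L^2(\Gamma)$ is bounded, absolute convergence in $H^{1/2}$ implies it in $L^2$, giving the footnoted remark as well.

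Finally I would tie the term-by-term differentiation to $\Lambda(q,\lambda)$ itself. The map $\lambda\mapsto\Lambda(q,\lambda)f$ is holomorphic on $\rho(A(q))$ with values in $H^{1/2}(\Gamma)$ (from \r{M1x} and analytic Fredholm theory / resolvent holomorphy), and on a neighbourhood of any $\lambda_0\in\rho(A(q))$ it agrees with the locally uniformly convergent series $\sum_k (\lambda_k(q)-\lambda)^{-1}(\int_\Gamma f\,\partial_\nu\varphi_k)\partial_\nu\varphi_k$ — this identity for the $0$-th derivative is the rigorous form of \r{8}, which can be checked by pairing with test functions $g\in C^\infty(\Gamma)$ and using the spectral decomposition of $u(q,\lambda)(f)$, then extending by density and continuity. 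A locally uniformly convergent series of holomorphic Banach-space-valued functions may be differentiated term by term, so $\Lambda^{(m)}(q,\lambda)f$ equals the series obtained by differentiating each term $m$ times, which is precisely the right-hand side of \r{l2.2}. The main obstacle is the first step — rigorously identifying $\Lambda(q,\lambda)$ with the spectral series \r{8}, since that series is only conditionally (form-)convergent; I would circumvent this by never differentiating \r{8} directly but rather by first differentiating enough times (any fixed $m$ with \r{m}) to land in the absolutely convergent regime, and invoking holomorphy to transfer the differentiation from $\Lambda(q,\lambda)$ onto the series, exactly as indicated in \cite{MR1070844,NachmanSU_Borg-Lev,Ch}.
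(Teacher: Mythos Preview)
The paper does not prove this lemma itself; it is quoted from \cite{NachmanSU_Borg-Lev,Ch} (see the paragraph preceding the lemma and the footnote referring to Lemme~2.28 in \cite{Ch}). So the comparison is with the standard argument in those references.

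Your absolute-convergence estimate is essentially correct, and the exponent bookkeeping is right in spirit; you are only a bit vague about why the sharper threshold $m>n/2+3/4$ is needed for convergence in $H^{1/2}(\Gamma)$ (one uses $\|\partial_\nu\varphi_k\|_{H^{1/2}(\Gamma)}\le C\|\varphi_k\|_{H^2(\Omega)}\le C\lambda_k$ for the outer factor together with \r{1.2} for the inner product, giving terms $O(\lambda_k^{3/4+\epsilon-m})$).

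The genuine gap is in your last paragraph. You first claim that \r{8} is a ``locally uniformly convergent series'' which can be differentiated term by term, then immediately retract, correctly noting that \r{8} is only formal. But your proposed workaround --- ``first differentiate enough times \ldots and invoke holomorphy to transfer the differentiation from $\Lambda(q,\lambda)$ onto the series'' --- is circular: one cannot transfer differentiation onto a series that does not converge, and the assertion that the $m$-times differentiated series equals $\Lambda^{(m)}$ is precisely what has to be proved. Holomorphy of $\lambda\mapsto\Lambda(q,\lambda)f$ by itself provides no link to the series. Likewise, your suggestion to verify \r{8} ``by pairing with test functions $g$'' runs into the same problem: the form series has general term of size $|\lambda_k-\lambda|^{-1}\|\partial_\nu\varphi_k\|_{L^2(\Gamma)}^2\sim k^{(1+\epsilon)/n}$, which diverges.

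The fix, which is what the cited references actually do, is to avoid \r{8} entirely and make the identification in the interior of $\Omega$. Differentiating \r{M1x} in $\lambda$ gives $(-\Delta+q-\lambda)\partial_\lambda u=u$ with $\partial_\lambda u|_\Gamma=0$, hence $\partial_\lambda u=R(q,\lambda)u$ and, by induction, $\partial_\lambda^m u=m!\,R(q,\lambda)^m u$. Green's formula yields $(u,\varphi_k)_{L^2(\Omega)}=-(\lambda_k-\lambda)^{-1}\int_\Gamma f\,\partial_\nu\varphi_k$, so the spectral representation of $R^m$ gives
\[
\partial_\lambda^m u \;=\; -\,m!\sum_{k\ge 1}(\lambda_k-\lambda)^{-m-1}\Big(\int_\Gamma f\,\partial_\nu\varphi_k\Big)\varphi_k
\]
as an identity in $L^2(\Omega)$, with no convergence issues. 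Since $\partial_\lambda^m u\in D(A(q))=H^1_0(\Omega)\cap H^2(\Omega)$, its normal trace lies in $H^{1/2}(\Gamma)$; for $m>n/2+3/4$ your own estimate shows that the corresponding boundary series converges absolutely in $H^{1/2}(\Gamma)$, and one checks it equals $\partial_\nu(\partial_\lambda^m u)=\Lambda^{(m)}(q,\lambda)f$. The identification is thus made directly at order $m$, without ever passing through \r{8}.
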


We can adapt Lemma~2.32 in \cite{Ch} which  goes back to \cite{NachmanSU_Borg-Lev} to the complex case and with $H^{1/2}(\Gamma )$ in place of $H^{3/2}(\Gamma )$. We obtain

\begin{lemma}\label{lemma2.1} 
Under the assumptions of Theorem~\ref{thm1.1M}, for any positive integer $l$ and $0<\epsilon <1/2$, there exists a constant $C_\epsilon >0$, that can depend only on $M$, $\Omega$, $l$ and $\epsilon$, such that
\be{l2.1}
\| \Lambda ^{(j)}(q_1,\lambda )-\Lambda ^{(j)}(q_2,\lambda ) \|_{ \mathcal{L}(H^{1/2}(\Gamma ), L^2(\Gamma )) }\leq \frac{C_\epsilon }{|\Re \lambda |^{j+\sigma _\epsilon}},\; 0\leq j\leq l,\; \Re \lambda \leq -2M,\; \sigma _\epsilon =\frac{1-2\epsilon}{4}.
\ee
\end{lemma}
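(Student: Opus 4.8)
The plan is to reduce the whole estimate to one genuine inequality — a boundary‑layer bound for the Poisson operator of $A(q_2)$ — everything else being soft. Write $R_i(\lambda)=(A(q_i)-\lambda)^{-1}$ for $\lambda\in\rho(A(q_i))$, and let $P_i(\lambda)\colon H^{1/2}(\Gamma)\to H^1(\Omega)$ denote the Poisson operator solving \r{M1x}, so that $u(q_i,\lambda)(f)=P_i(\lambda)f$ and $\Lambda(q_i,\lambda)=\partial_\nu\circ P_i(\lambda)$. Differentiating $(-\Delta+q_i-\lambda)P_i(\lambda)=0$ in $\lambda$ (the boundary trace being $\lambda$‑independent) gives $P_i^{(b)}(\lambda)=b!\,R_i(\lambda)^bP_i(\lambda)$, differentiating $R_i(\lambda)(A(q_i)-\lambda)=\Id$ gives $R_i^{(a)}(\lambda)=a!\,R_i(\lambda)^{a+1}$, and the computation behind \r{M1a} reads $P_1(\lambda)-P_2(\lambda)=R_1(\lambda)(q_2-q_1)P_2(\lambda)$. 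Since $q_2-q_1$ is $\lambda$‑independent, Leibniz's rule yields, for $0\le j\le l$,
\[
\Lambda^{(j)}(q_1,\lambda)-\Lambda^{(j)}(q_2,\lambda)=\partial_\nu\frac{d^{\,j}}{d\lambda^{\,j}}\bigl(R_1(\lambda)(q_2-q_1)P_2(\lambda)\bigr)=j!\sum_{a+b=j}\partial_\nu\,R_1(\lambda)^{a+1}(q_2-q_1)\,R_2(\lambda)^bP_2(\lambda).
\]

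The next step is to record the routine bounds, uniform for $\Re\lambda\le-2M$. Since $A(q_i)$ is self‑adjoint with $\sigma(A(q_i))\subset[-M,\infty)$, we have $\|R_i(\lambda)\|_{\mathcal L(L^2(\Omega))}\le\dist(\lambda,\sigma(A(q_i)))^{-1}\le 2/|\Re\lambda|$, and also $\dist(\lambda,\sigma(A(q_i)))\ge|\lambda|/2$ there; combining this with the Dirichlet $H^2$ a priori estimate and $\|\Delta R_i(\lambda)g\|_{L^2}\le(|\lambda|+M)\|R_i(\lambda)g\|_{L^2}+\|g\|_{L^2}$ gives $\|R_i(\lambda)\|_{\mathcal L(L^2(\Omega),H^2(\Omega))}\le C(\Omega,M)$, and composing with $H^2(\Omega)\ni v\mapsto\partial_\nu v\in H^{1/2}(\Gamma)\hookrightarrow L^2(\Gamma)$ gives $\|\partial_\nu R_i(\lambda)\|_{\mathcal L(L^2(\Omega),L^2(\Gamma))}\le C$. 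Feeding these into the displayed identity, the $(a,b)$‑summand has $\mathcal L(H^{1/2}(\Gamma),L^2(\Gamma))$‑norm at most $C\,(2C/|\Re\lambda|)^{a+b}\,(2M)\,\|P_2(\lambda)\|_{\mathcal L(H^{1/2}(\Gamma),L^2(\Omega))}$, so summing the $j+1$ terms and the factor $j!$ reduces the lemma to the single estimate
\[
\|P_2(\lambda)\|_{\mathcal L(H^{1/2}(\Gamma),L^2(\Omega))}\le C_\epsilon\,|\Re\lambda|^{-\sigma_\epsilon},\qquad \sigma_\epsilon=\tfrac{1-2\epsilon}{4},\quad \Re\lambda\le-2M .
\]

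This last estimate is the heart of the matter and the step I expect to fight with; it is exactly where we invoke an adaptation of Lemme~2.32 of \cite{Ch}. I would build a global approximate solution by a boundary‑layer ansatz in a collar neighbourhood of $\Gamma$: in boundary normal coordinates $(x',d)$, $d=\dist(\cdot,\Gamma)$, set $E(\lambda)f=\chi(d)\,e^{-d\,\Xi(\lambda)}f$, where $\chi$ cuts off in $d$ and $\Xi(\lambda)=\mathrm{Op}_{x'}\bigl(\sqrt{|\xi'|^2-\lambda}\bigr)$ is the tangential square root. This is well defined for every complex $\lambda$ with $\Re\lambda\le-2M$, with good symbol bounds and, crucially, $\Re\sqrt{|\xi'|^2-\lambda}\ge\sqrt{|\xi'|^2+|\Re\lambda|}$, because $\Re(|\xi'|^2-\lambda)=|\xi'|^2+|\Re\lambda|>0$. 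Then $(-\Delta+q_2-\lambda)E(\lambda)f=r(\lambda)f$, where $r(\lambda)f$ collects only the (exponentially small) cutoff contribution, the curvature and lower‑order terms produced by the flattening, and the $q_2$‑term, so that $P_2(\lambda)f=E(\lambda)f-R_2(\lambda)r(\lambda)f$. The leading term is estimated by Plancherel in $x'$ (with $|\chi|\le1$),
\[
\|E(\lambda)f\|_{L^2(\Omega)}^2\lesssim\int\bigl(2\Re\sqrt{|\xi'|^2-\lambda}\bigr)^{-1}|\widehat f(\xi')|^2\,d\xi'\le\int\bigl(|\xi'|^2+|\Re\lambda|\bigr)^{-1/2}|\widehat f(\xi')|^2\,d\xi'\le|\Re\lambda|^{-1/2}\,\|f\|_{H^{1/2}(\Gamma)}^2,
\]
using $\bigl(|\xi'|^2+|\Re\lambda|\bigr)\bigl(1+|\xi'|^2\bigr)\ge|\Re\lambda|$, which already produces the leading rate $|\Re\lambda|^{-1/4}$; the remainder $R_2(\lambda)r(\lambda)f$ is genuinely lower order and is absorbed by the resolvent bound $\|R_2(\lambda)\|_{\mathcal L(L^2(\Omega))}\le 2/|\Re\lambda|$, the arbitrarily small $\epsilon$ accommodating the commutators from the flattening and the partition of unity over finitely many collar charts. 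After extending $f$ off $\Gamma$ with the usual controlled loss one obtains the estimate on all of $\Omega$. The only delicate part is this localization and error analysis, and for it we rely on the computation already carried out — in the real case and with a slightly different target space — in \cite{Ch}.
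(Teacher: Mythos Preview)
Your proposal is correct and is precisely the adaptation the paper has in mind: the paper does not give a self-contained proof of this lemma but simply refers to Lemme~2.32 in \cite{Ch} (going back to \cite{NachmanSU_Borg-Lev}), noting that one must allow complex $\lambda$ and take $H^{1/2}(\Gamma)$ instead of $H^{3/2}(\Gamma)$. Your reduction via $P_1-P_2=R_1(q_2-q_1)P_2$, Leibniz's rule, and the standard resolvent bounds, followed by the boundary-layer construction for $\|P_2(\lambda)\|_{H^{1/2}(\Gamma)\to L^2(\Omega)}\le C_\epsilon|\Re\lambda|^{-\sigma_\epsilon}$, is exactly that argument carried out in the present setting.
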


The importance of this lemma is to provide some decay of the difference $\Lambda(q_1,\lambda )-\Lambda(q_2,\lambda )$, as $\Re\lambda\to -\infty$, in suitable norms; and similarly for the derivatives.  Such an estimate is not surprising, of course, because when $\Re\lambda\to -\infty$, $\lambda$ moves away from the spectrum. 

By Lemma~\ref{lemma2.1},
\be{L}
\Lambda(q_1,\lambda )-\Lambda(q_2,\lambda )= \int_{-\infty}^\lambda \d\lambda_1 \int_{-\infty}^{\lambda_1} \d\lambda_2\dots \int_{-\infty}^{\lambda_{m-1}} \d\lambda_m   
\left(\Lambda ^{(m)}(q_1,\lambda_m )-\Lambda ^{(m)}(q_2,\lambda_m) \right), \quad\lambda\not\in \R,
\ee
in $\mathcal{L}(H^{1/2}(\Gamma ), L^2(\Gamma ))$, where, for non-real $\lambda$, the integrals above are taken over the lines $\Im\lambda_j=\Im\lambda$, $j=1,\dots,m-1$. The estimate \r{l2.1} in Lemma~\ref{lemma2.1}, for $j\ge1$, shows that each integral is absolutely convergent; and the same estimate for $j\ge0$ shows that the initial condition after each integration is zero at $-\infty$. Now, plugging \r{l2.2} into \r{L} provides a direct formula expressing the difference of the DN maps in terms of the spectral data.  On the other hand, integrating term by term is not justified, which is the main reason to work with the differentiated series \r{l2.2}.

At the end of this section, we recall a lemma in \cite{Isozaki89} which is the basis for the proofs of the main theorems.

Let $\varphi_{\lambda ,\omega}(x)=e^{i\sqrt{\lambda}\omega \cdot x}$, $\lambda \in \mathbb{C}\setminus (-\infty ,0]$, with the standard choice of the branch of the square root, and $\omega \in \mathbb{S}^{n-1}$, we consider 
$$
S(q)(\lambda ,\omega ,\theta )=\int_\Gamma \Lambda (q, \lambda )(\varphi_{\lambda ,\omega})\varphi_{\lambda ,-\theta}d\sigma (x),\; \lambda \in \rho(A(q))\setminus (-\infty ,0],\; \omega ,\; \theta \in \mathbb{S}^{n-1}.
$$
 Following Lemma~2.2 in \cite{Isozaki89} we have, for $\lambda \in \rho(A(q))\setminus (-\infty ,0],\; \omega ,\; \theta \in \mathbb{S}^{n-1}$,
 \begin{eqnarray}\label{3.1}
 S(q,\lambda ,\omega ,\theta ) &=& -\frac{\lambda}{2}|\theta -\omega |^2\int_\Omega e^{-i\sqrt{\lambda}(\theta -\omega )\cdot x}dx
 \\
 && +\int_\Omega e^{-i\sqrt{\lambda}(\theta -\omega )\cdot x}q(x)dx-\int_\Omega R(q,\lambda )(q\varphi_{\lambda ,\omega})q\varphi_{\lambda ,-\theta}dx, \nonumber
 \end{eqnarray}
 where $R(q,\lambda )=(A(q)-\lambda )^{-1}$ is the resolvent.
 
 We fix $\xi \in \mathbb{S}^{n-1}$ and $\eta \in \mathbb{S}^{n-1}$, $\eta \bot \xi$. For $\tau >1$, let
 \begin{eqnarray*}
 \left\{
 \begin{array}{ll}
 \theta _\tau = c_\tau \eta +\frac{1}{2\tau}\xi
 \\
 \\
 \omega _\tau = c_\tau \eta -\frac{1}{2\tau}\xi
 \\
 \\
 \sqrt{\lambda _\tau}=\tau +i .
 \end{array}
 \right.
 \end{eqnarray*}
 
 Let $q_1$, $q_2$ satisfy the assumptions of Theorem~\ref{thm1.1M} and let $\varphi (q_1)$, $\varphi (q_2)$ be an arbitrary orthonormal basis.  We fix $0<\epsilon <1/2$ and we set $\sigma =\sigma _\epsilon =(1-2\epsilon )/4$. 
 
 In the sequel $C$ is a generic constant that can depend only on $n$, $\Omega$, $M$ and $\epsilon$. Also, for simplicity, we drop the subscript in $\lambda _\tau$, $\omega _\tau$ and $\theta _\tau$.

Using the classical estimate for the resolvent, where $q=q_1$ or $q_2$,
 $$
 \|R(q,z )\|_{\mathcal{L}(L^2(\Omega ))}\leq \frac{1}{|\Im z|},\; z \not\in \mathbb{R},
 $$
 we easily prove
 \begin{equation}\label{3.2}
\Big|\int_\Omega R(q_1,\lambda )(q_1\varphi_{\lambda ,\omega})q_1\varphi_{\lambda ,-\theta}dx\Big|+\Big|\int_\Omega R(q_2,\lambda )(q_2\varphi_{\lambda ,\omega})q_2\varphi_{\lambda ,-\theta}dx\Big|\leq \frac{C}{\tau}.
\end{equation}  

We deduce from identity \eqref{3.1} and estimate \eqref{3.2}, where the extension by zero outside $\omega$ of $q$ is still denoted by $q$,
\begin{equation}\label{3.3}
|(\hat q_1-\hat q_2)(\xi +\frac{i}{\tau }\xi )|\leq \frac{C}{\tau}+|S(q_1,\lambda , \theta ,\omega )-
S(q_2,\lambda , \theta ,\omega )|.
\end{equation}
Since $\hat q_1-\hat q_2$ is an entire function, uniqueness   would follow if we can show that the difference in the right hand side tends to $0$, as $\tau\to\infty$. For stability, we need to estimate the same difference in terms of the spectral data.

\section{Proof of the uniqueness result} 
In this section, we prove Theorem~\ref{thm_uniqueness}.

\begin{proof}[Proof of Theorem~\ref{thm_uniqueness}] 
Notice first that
\be{11}
\phi_{\lambda,\omega}(x) = e^{(\i\tau-1)\omega\cdot x}\quad \Longrightarrow\quad \left| \phi_{\lambda,\omega}(x)\right|\le C,\quad x\in\Omega. 
\ee
By \r{M1}, the difference $\Lambda(q_1,\lambda )- \Lambda(q_2,\lambda )$  is bounded in $L^2(\bo)$. 

We  get from \r{3.3},
\be{13}
|(\hat q_1-\hat q_2)((1+\i/\tau)\xi)| \le  C \|   \Lambda(q_1,(\tau+i)^2)-  \Lambda(q_2,(\tau+i)^2)  \|_{\mathcal{L}(L^2(\bo))}  +C/\tau.
\ee
where $C$  depends only on an a priori bound for $\|q_1\|_{L^\infty (\Omega)}+\|q_2\|_{L^\infty (\Omega )}$ and $\Omega$. As a corollary, if 
\be{15}
 \|\Lambda(q_1,(\tau+i)^2)-  \Lambda(q_2,(\tau+i)^2)  \|_{\mathcal{L}(L^2(\bo))} = o(1),\quad \text{as $\tau\to\infty$}, 
\ee
then $q_1= q_2$.

Let us see what assumptions would imply \r{15}. Let $f$ and $g$ be fixed with
\[
\|f\|_{L^2(\bo)}=1, \quad \|g\|_{L^2(\bo)}=1.
\]
We have, formally,
\be{16}
\begin{split}
\langle ( \Lambda(q_1,\lambda)- \Lambda(q_2,\lambda))f,g\rangle &= 
\sum_j \frac{1}{\lambda-\lambda_j(q_1)}\langle \partial_\nu \phi_j(q_1),g\rangle \langle  \partial_\nu \phi_j(q_1),f \rangle \\
&\quad - \sum_j \frac{1}{\lambda- \lambda_j(q_2)} \langle \partial_\nu  \phi_j(q_2),g\rangle  \langle\partial_\nu   \phi_j(q_2),   f\rangle = I_1+I_2+I_3,
\end{split}
\ee
where  
\begin{eqnarray*}
&&I_1f=\sum_j \left( \frac{1}{\lambda-\lambda_j(q_1)}  -  \frac{1}{\lambda-\lambda_j(q_2)} \right) 
\langle \partial_\nu \phi_j(q_1),g\rangle \langle  \partial_\nu \phi_j(q_1),f \rangle ,
\\
&&I_2f=\sum_j  \frac{1}{\lambda-\lambda_j(q_2)}  \left(  \langle \partial_\nu \phi_j(q_1),g\rangle     -  \langle \partial_\nu \phi_j(q_2),g\rangle  \right) 
\langle  \partial_\nu \phi_j(q_1),f \rangle ,
\\
&&I_3f=\sum_j \frac{1}{\lambda-\lambda_j(q_2)}    \langle \partial_\nu \phi_j(q_2),g\rangle  
\left( \langle  \partial_\nu \phi_j(q_2),f \rangle -   \langle \partial_\nu \phi_j(q_1),g\rangle  \right) .
\end{eqnarray*}

From now on, $\lambda=(\tau+i)^2$. For $I_1$, we get
\[
\begin{split}
I_1 &= \sum_j\left( \frac{1}{\lambda-\lambda_j(q_1)} - \frac{1}{\lambda- \lambda_j(q_2)}\right) \langle \partial_\nu \phi_j(q_1),g\rangle \langle  \partial_\nu \phi_j(q_1),f \rangle\\
  &= \sum_j \frac{\lambda_j(q_1)-\lambda_j(q_2)}{(\lambda- \lambda_j(q_2))(\lambda- \lambda_j(q_1) )} \langle \partial_\nu \phi_j(q_1),g\rangle \langle  \partial_\nu \phi_j(q_2),f \rangle .
\end{split}
\]
Note that for any $\eps>0$,
\[
|\langle  \partial_\nu \phi_j,f \rangle |\le \|\partial_\nu\phi_j \|_{L^2(\bo)} \|f\|_{L^2(\bo)}\le CAj^{(3/2+\eps)/n} .
\]
We used the following estimates here:
\be{better}
\|\partial_\nu\phi_j \|_{L^2(\bo)} \le \|\partial_\nu\phi_j \|_{H^{\eps /2}(\bo)}\le C\|\phi_j\|_{H^{3/2+ {\eps /2}}} \le C_\eps \lambda_j^{3/4+ {\eps /2}}\le C_\eps' j^{(3/2+\eps)/n}.
\ee
Therefore,
\be{18}
|I_1|\le C \sum_{j=1}^\infty \frac{j^{-\alpha+(3+2\eps)/n}}{ |(\lambda-  \lambda_j(q_1)) ( \lambda- \lambda_j(q_2))|}  .
\ee
For $\lambda$ fixed and not among the eigenvalues, the series converges when $-\alpha+(3+2\eps)/n-4/n<-1$, i.e., when $\alpha>1-1/n$ (one can always find $\eps\ll1$ so that  $-\alpha+(3+2\eps)/n-4/n<-1$). 
For $\lambda=(\tau+i)^2$, we apply Lemma~\ref{lemma1} below, see also Remark~\ref{remark4} with $\mu= -\alpha+(3+2\eps)/n$, $0<\eps\ll1$ and  $\nu=2$  to conclude that 
\[
|I_1|\le C\tau^{  (1-\alpha)n+2\eps }
\]
when $\alpha\le 1+1/n +2\eps/n$. Choose $\alpha=1+3\eps/n$, $\eps< 1/n$ to get $O(\tau^{-\eps})$ above. 
 
To estimate $I_2$, write
\be{19b}
|I_2|\le C \sum_j \frac{j^{-\beta}j^{(3/2+\eps)/n}  }{|\lambda-  \lambda_j(q_2)|}. 
\ee
We apply Lemma~\ref{lemma1} again,  with $\nu=1$  and $\mu= -\beta+(3/2+\eps)/n$, see also Remark~\ref{remark3}. Since $\beta>1-1/(2n)$, we can always find $0<\epsilon\ll1$ so that $\beta>1-1/(2n)+ {4\eps/n}$.  With that choice of $\beta$, we apply the second or the third inequality in \r{34a}. Note that $\mu<2/n-1$; in fact, $\mu<2/n-1-3\eps/n$, and $(\mu+1)n/2<1-3\eps/2$, see \r{34a}. If, in addition, $-1\le \mu$, we apply the second inequality to get
\[
|I_2|\le C_\eps\tau^{   \eps+ (\mu+1)n/2-\nu}\le C\tau^{-\eps/2}.
\]
If $\mu<-1$, we apply the third inequality in \r{34a} that gives us the even better estimate $I_2=O(\tau^{-1})$.  

 We treat $I_3$ in a similar way.

Therefore, \r{15} is satisfied, so $\hat q_1(\xi)=\hat q_2(\xi)$ for any $\xi$. Therefore, $q_1=q_2$. 
\end{proof}

 Remark. Let us discuss briefly why formula $(4.4)$ is valid under assumption $(2.1)$. To this end we set $I(\lambda )=I_1+I_2+I_3$, where $I_1$, $I_2$ and $I_3$ are as above. Then one can prove in a straightforward way that $I$ is analytic in $\rho (A(q_1))\cap \rho (A(q_2))$. By using Lemma 3.1 and the fact that weak analyticity is equivalent to strong analyticity, we deduce, where $m>n/2+1$ is fixed,
$$
I^{(m)}(\lambda )=\langle (\Lambda ^{(m)}(q_1,\lambda )-\Lambda ^{(m)}(q_2,\lambda ))f,g\rangle ,\;\lambda \in \rho (A(q_1))\cap \rho (A(q_2)).
$$
Therefore
$$
I(\lambda )=\langle (\Lambda (q_1,\lambda )-\Lambda (q_2,\lambda ))f,g\rangle +\sum_{k=1}^{m-1}a_k\lambda ^k,\;\lambda \in \rho (A(q_1))\cap \rho (A(q_2)).
$$ 
From Lemma 3.2 we know that
$$
\lim_{\tau \rightarrow +\infty}\langle (\Lambda (q_1,(\tau +i)^2 )-\Lambda (q_2, (\tau +i)^2 ))f,g\rangle =0.
$$
On the other hand, from the proof of Theorem 2.1, we easily see that
$$
\lim_{\tau \rightarrow +\infty}I((\tau +i)^2)= 0.
$$
Hence $a_k=0$, $0\leq k\leq m-1$.

The following lemma was used in the proof. Its proof, on the other hand is based on Lemma~\ref{lemma2} below. 

\begin{lemma} \label{lemma1}
Let 
\be{34}
I(\lambda)  = \sum_{j=1}^\infty \frac{j^\mu}{\left|  \lambda-\lambda_j(q)\right|^\nu }, \quad \mu\in\R,\; \nu\ge 0.
\ee
Then for $\mu<2\nu/n-1$ and $\lambda=\tau+i$, $\tau>0$,  the series converges absolutely and for any $\eps>0$, $\tau>1$,
\be{34a}
|I((\tau+i)^2)| \le \begin{cases}
 C\tau^{(\mu+1)n-1-\nu}& \text{for $2/n-1  \le \mu$},\\
 C_\eps\tau^{ \eps+ (\mu+1)n/2-\nu}  &\text{for $-1\le \mu < 2/n-1   $},\\
  C\tau^{  -\nu} & \text{for $ \mu < -1   $ }.
\end{cases}
\ee
\end{lemma}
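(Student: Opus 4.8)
The plan is to estimate the sum $I((\tau+i)^2)=\sum_j j^\mu |(\tau+i)^2-\lambda_j(q)|^{-\nu}$ by splitting the index range according to where $\lambda_j(q)$ sits relative to $\tau^2=\Re((\tau+i)^2)$, using the two-sided bound $c_\ast j^{2/n}\le\lambda_j(q)\le c^\ast j^{2/n}$ from \eqref{1.1}. The key elementary fact is that $|(\tau+i)^2-\lambda_j|^2=(\tau^2-1-\lambda_j)^2+4\tau^2$, so $|(\tau+i)^2-\lambda_j|\ge 2\tau$ always, and $|(\tau+i)^2-\lambda_j|\ge c|\tau^2-\lambda_j|$ when $|\tau^2-\lambda_j|\ge 2\tau$, i.e. the denominator is bounded below both by $\tau$ (uniformly) and by the "classical" distance $|\tau^2-\lambda_j|$ away from a window of width $O(\tau)$ around $\tau^2$. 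Accordingly I would break the sum into: (a) $\lambda_j\le \tau^2/2$; (b) $\tau^2/2<\lambda_j<2\tau^2$ (the "resonant" band); (c) $\lambda_j\ge 2\tau^2$.

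First I would handle the resonant band (b). There $j\asymp \tau^n$, so $j^\mu\asymp\tau^{n\mu}$, and there are $O(\tau^n)$ indices in the band, while each denominator is $\ge(2\tau)^\nu$. Hence the contribution is $\lesssim \tau^{n\mu}\cdot\tau^n\cdot\tau^{-\nu}=\tau^{(\mu+1)n-\nu}$; but one can do better by also exploiting that most $j$ in the band are not within $O(\tau)$ of $\tau^2$. Using the lower bound $|(\tau+i)^2-\lambda_j|\gtrsim\tau+|\tau^2-\lambda_j|$ and $\lambda_{j+1}-\lambda_j\gtrsim$ (a positive quantity controlled via \eqref{1.1}, or more safely just counting indices in dyadic shells $|\tau^2-\lambda_j|\sim 2^k\tau$), the band sum is $\lesssim \tau^{n\mu}\sum_{2^k\le\tau}(2^k\tau)^{-\nu}\cdot(\text{number of }j\text{ with }|\tau^2-\lambda_j|\sim 2^k\tau)$; since that number is $\lesssim 2^k\tau\cdot\tau^{n-2}$ (the density of eigenvalues near level $\tau^2$ is $\asymp\tau^{n-2}$, from $d\lambda_j\asymp j^{2/n-1}dj$ with $j\asymp\tau^n$), one gets $\lesssim\tau^{n\mu}\cdot\tau^{n-2}\cdot\tau\cdot\tau^{-\nu}\sum_{2^k\le\tau}2^{k(1-\nu)}$. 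The geometric sum contributes $O(1)$ if $\nu>1$, $O(\log\tau)$ if $\nu=1$, $O(\tau^{1-\nu})$ if $\nu<1$ — this is exactly where the $\tau^\eps$ in the middle case of \eqref{34a} comes from and where I expect the bookkeeping to be most delicate. For (c), $\lambda_j\ge 2\tau^2$ forces $|(\tau+i)^2-\lambda_j|\gtrsim\lambda_j\gtrsim j^{2/n}$, so the tail is $\lesssim\sum_{j\gtrsim\tau^n} j^{\mu-2\nu/n}$, which converges since $\mu<2\nu/n-1$ and equals $O(\tau^{(\mu+1)n-2\nu/n\cdot n})=O(\tau^{(\mu+1)n-2\nu})$, dominated by the band bound. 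For (a), $\lambda_j\le\tau^2/2$ gives $|(\tau+i)^2-\lambda_j|\gtrsim\tau^2$, so this piece is $\lesssim\tau^{-2\nu}\sum_{j\lesssim\tau^n}j^\mu$, which is $\lesssim\tau^{-2\nu}\cdot\tau^{(\mu+1)n}$ if $\mu>-1$, $\lesssim\tau^{-2\nu}\log\tau$ if $\mu=-1$, and $\lesssim\tau^{-2\nu}$ if $\mu<-1$; in the last case this already gives the clean $O(\tau^{-\nu})$-type bound of the third line of \eqref{34a} (indeed better), and in the other cases it is again dominated by the band estimate.

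Collecting the three pieces: for $\mu\ge 2/n-1$ the band term $\tau^{(\mu+1)n-2}\cdot\tau^{1-\nu}$-type contribution with the geometric factor, after combining, gives $\tau^{(\mu+1)n-1-\nu}$; for $-1\le\mu<2/n-1$ a more careful summation of the resonant shells (now the shells near $\tau^2$ where $j^\mu$ is largest must be weighted) yields $\tau^{\eps+(\mu+1)n/2-\nu}$; and for $\mu<-1$ the sum is dominated by the low-frequency part and the nearest few eigenvalues, giving $\tau^{-\nu}$. The convergence claim is immediate from part (c) since $\mu-2\nu/n<-1$. I would organize the shell estimates by invoking Lemma~\ref{lemma2} (stated below in the paper) in place of the ad hoc dyadic counting above; the main obstacle is getting the exponent split at $\mu=2/n-1$ exactly right, i.e. tracking whether the dominant contribution to the resonant band comes from its outer edge ($j\asymp\tau^n$, favoring large $\mu$) or from the eigenvalues closest to $\tau^2$ (favoring the case $\mu<2/n-1$, where the $\eps$ loss is incurred from the borderline geometric/logarithmic sum).
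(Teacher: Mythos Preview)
Your overall architecture --- split the index set, handle the ``resonant'' eigenvalues near $\tau^2$ separately, and sum the tails --- is the same as the paper's. But there is a genuine gap in your execution: the dyadic shell count in part (b). You claim that the number of $j$ with $|\tau^2-\lambda_j|\sim 2^k\tau$ is $\lesssim 2^k\tau\cdot\tau^{n-2}$, justifying this by ``$d\lambda_j\asymp j^{2/n-1}\,dj$''. This density statement does \emph{not} follow from the two-sided bound \eqref{1.1} alone. With only $c_\ast j^{2/n}\le\lambda_j\le c^\ast j^{2/n}$, the set of $j$ whose $\lambda_j$ lies in a window of width $O(\tau)$ around $\tau^2$ could have cardinality as large as $O(\tau^n)$, because \eqref{1.1} permits $\lambda_j$ to sit anywhere in an interval of length $\asymp j^{2/n}\asymp\tau^2$. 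Your rough count without the dyadic refinement gives $\tau^{(\mu+1)n-\nu}$, which is off by a full factor of $\tau$ from the target $\tau^{(\mu+1)n-1-\nu}$; so the refinement is essential, and it cannot be carried out with the tools you cite.

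What the paper does instead is invoke the sharp Weyl asymptotic with remainder, $\lambda_j^{1/2}(q)=\tilde c_n j^{1/n}+O(1)$ (their \eqref{W2}, \eqref{W2a}). With this in hand they take a much narrower band, $|\lambda_j^{1/2}-\tau|\le 2A_n$, which contains only $O(\tau^{n-1})$ eigenvalues; the trivial bound $|\lambda-\lambda_j|\ge 2\tau$ then already gives $\tau^{n-1+n\mu-\nu}$ for that piece. Outside the narrow band they use \eqref{W2a} to replace $\lambda_j$ by $(\tilde c_n j^{1/n}\pm A_n)^2$ in the denominator, obtaining monotone summands that are then dominated by the continuous integral $I^\sharp$ of \eqref{22}, to which Lemma~\ref{lemma2} applies. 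If you add the sharp Weyl formula as an input, your dyadic approach can be made to work and is essentially equivalent; but as written, the appeal to \eqref{1.1} is insufficient, and the heuristic $d\lambda_j\asymp j^{2/n-1}\,dj$ is exactly the content of the missing remainder estimate.
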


\begin{remark}\label{remark3a}
The condition $2/n-1  \le \mu$ is not compatible with the convergence condition  $\mu<2\nu/n-1$ when $0\le \nu\le1$. In that case, only the second and the third inequalities above are applicable. The case $\nu=1$ was of particular importance above, see \r{19b} and  also Remark~\ref{remark3}.
\end{remark}

\begin{proof} 
We will sketch first the idea. Since $\lambda_j(q)\sim j^{2/n}$, we get
\be{20}
\begin{split}
|I\left((\tau+i)^2\right)|  &\sim  \sum \frac{j^{\mu}}{ \left|\lambda -j^{2/n}\right|^\nu     } = \sum \frac{j^{\mu}}{ \left(|\tau^2-1-j^{2/n}|^2 +4\tau^2 \right)^{\nu/2}   }\\ 
 &\sim I^\sharp := \int_1^\infty \frac{x^\mu}{\left(\big|\tau^2- x^{2/n}\right|^2+4\tau^2 \big)^{\nu/2} }\,d x.
\end{split}
\ee
The series and the integral are convergent, if $\mu - 2\nu/n<-1$. 
Make the change $x^{2/n}=t$ to get
\be{22}
I^\sharp = C\int_1^\infty \frac{t^{b}\, d t}{ \left((t-\tau^2)^2+4\tau^2 \right)^{\nu/2} },  \quad b:= (\mu+1)n/2-1.
\ee
The convergence condition takes the form $b-\nu<-1$. By Lemma~\ref{lemma2} below, for any  $\eps>0$, and for $\mu<-1+2\nu/n$, 
\begin{alignat}{3}\label{l'2a}
          I^\sharp &\le C\tau^{(\mu+1)n-1-\nu}  &\quad &\text{for $0\le (\mu+1)n/2-1$}, &\quad&\text{i.e., for $2/n-1  \le \mu   $},\\
          I^\sharp &\le C_\eps\tau^{ \eps+ (\mu+1)n/2-\nu}  &\quad&\text{for $-1\le  (\mu+1)n/2-1<0$} \label{l'2b},     &\quad&\text{i.e., for $-1\le \mu < 2/n-1   $},\\
          I^\sharp &\le C_\eps\tau^{  -\nu} &\quad&\text{for $ (\mu+1)n/2-1<-1$} \label{l'2c}, &\quad&\text{i.e., for $ \mu < -1   $ }.
\end{alignat}

We proceed to the actual proof now. The only step that needs to by justified is \r{20} above. 
Let $N(r)$ be the counting function of the square roots $\lambda_j^{1/2}(q)$ of the eigenvalues $\lambda_j (q)$ counted with their multiplicities, i.e., $N(r) = \#\{\lambda_j^{1/2} (q)\le r\}$. 
We will use the Weyl asymptotic formula with a sharp estimate of the reminder term  (reference?)
\be{W1}
N(r) = c_n r^n +O(r^{n-1}).
\ee
Another way to write this asymptotic formula is 
\be{W2}
\lambda_j^{1/2}(q)  = \tilde c_n j^{1/n}+O(1).
\ee
As a consequence, 
\be{W2a}
|\lambda_j^{1/2}(q)-\tilde c_n j^{1/n}|\le A_n
\ee
with some $A_n>0$. Moreover, $A_n$ can be chosen independent of $q$ as long as $q$ belongs to a fixed ball in $L^\infty (\Omega )$. 

We start with
\[
|I((\tau+i)^2)| \le \sum_{j=1}^\infty \frac{j^\mu}{\left((\tau^2-1-\lambda_j (q))^2+4\tau^2\right )^{\nu/2} }\le \sum_{j=1}^\infty \frac{j^\mu}{\left((\tau^2-1-\lambda_j(q))^2+4(\tau^2-1)\right )^{\nu/2} }.
\]
Set $\tau_1= \sqrt{\tau^2-1}$, $\tau>1$. Write for simplicity $I := I((\sqrt{\tau_1^2+1}+i)^2)$  to get, after replacing $\tau_1$ by $\tau$ again,
\[
|I|\le \sum_{j=1}^\infty \frac{j^\mu}{\left((\tau^2-\lambda_j(q))^2+4\tau^2)\right )^{\nu/2} }.
\]
Clearly, it is enough to prove the estimates \r{l'2a}--\r{l'2c} for $I$. Split the sum above into three parts; the first, $I_1$, over the terms with $\lambda_j^{1/2}(q)\le \tau-2A_n$; and the second, $I_2$, for $\tau-2A_n\le \lambda_j^{1/2}(q)\le \tau+2A_n$; and the third containing all terms with $\lambda_j^{1/2}(q)>\tau+2A_n$. When $\lambda_j^{1/2}(q)\le \tau-2A_n$, we have 
\be{22a1}
|\tau^2-\lambda_j(q)| = \tau^2-\lambda_j(q) \ge \tau^2 -(\tilde c_nj^{1/n}+A_n)^2.
\ee
On the other hand, $\tilde c_nj^{1/n}+A_n\le \lambda_j^{1/2}(q) +2A_n\le\tau$, therefore, the term on the right above is non-negative. Then
\be{22a}
(\tau^2-\lambda_j (q))^2 \ge \left(\tau^2 -(\tilde c_nj^{1/n}+A_n)^2\right)^2.
\ee
When  $\lambda_j^{1/2}(q) >\tau+ 2A_n$, we have 
\be{22b1}
|\tau^2-\lambda_j(q)| = \lambda_j(q)-\tau^2 \ge   (\tilde c_nj^{1/n}-A_n)^2 - \tau^2.
\ee
Similarly, $\tilde c_nj^{1/n}-A_n\ge \lambda_j^{1/2}(q) - 2A_n\ge\tau$, therefore, the term on the right above is non-negative and we can take squares of both sides of the inequality to get
\be{22b}
(\tau^2-\lambda_j(q))^2 \ge \left(\tau^2 -(\tilde c_nj^{1/n}-A_n)^2\right)^2.
\ee
Therefore,
\be{23a}
\begin{split}
I_1 &\le \sum_{\lambda_j^{1/2}(q)\le \tau-2A_n}\frac{j^\mu}{\left((\tau^2- (\tilde c_n j^{1/n}+A_n)^2)^2+4\tau^2\right )^{\nu/2} }\\
I_3 &\le \sum_{\lambda_j^{1/2}(q)\ge \tau+2A_n} \frac{j^\mu}{\left((\tau^2- (\tilde c_n j^{1/n}-A_n)^2)^2+4\tau^2\right )^{\nu/2} }.
\end{split}
\ee
Since the r.h.s.\ of \r{22a1} is non-negative, each summand  the first series in \r{23a} is an increasing function of $j$. Therefore, it can be estimated by above by the integral
\[
I_1^\sharp := \int_1^\infty\frac{ y^\mu  \,d y}{  ((\tau^2-(\tilde c_ny^{1/n}+A_n)^2)^2+4\tau^2)^{\nu/2 }}.
\]
The change $\tilde c_n y^{1/n}+A_n=x^{1/n}$ reduces the analysis of $I_1^\sharp$ to that of  \r{22}. Then we use  Lemma~\ref{lemma2} to estimate $I_1^\sharp$, to conclude that the estimates \r{34a} in  Lemma~\ref{lemma1} apply to $I_1^\sharp$, and therefore to $I_1$, as well.  

We analyze $I_3$ in a similar way to conclude that the estimates \r{34a} apply to  $I_3$, as well. 

The analysis of $I_2$ is straightforward. In the interval $[\tau-2A_n,\tau+2A_n]$ there are $O(\tau^{n-1})$ square roots of eigenvalues $\lambda_j$'s. 
The numerator admits the estimate $j^\mu\le C\lambda_j^{n\mu/2}(q)\le C'\tau^{n\mu}$. The denominator can be estimated by below by $C\tau^\nu$. Therefore,
\[
I_2\le C\tau^{n-1+n\mu - \nu},
\]
which is as in \r{l'2a}, and stronger than \r{l'2b}, \r{l'2c}. This completes the proof of the lemma.
\end{proof} 

\begin{remark}\label{remark4}
The only property of the eigenvalues that we used was the estimate \r{W2a}. The lemma remains true if we replace $(\lambda-\lambda_j(q))^\nu$ by the product $(\lambda-\lambda_j(q_1))^{\nu_1}(\lambda-\lambda_j(q_2))^{\tilde \nu_2}$, $\nu_1+\nu_2=\nu$, where $\nu_{1,2}\ge0$, $\nu=\nu_1+\nu_2$. 
\end{remark}

We used the following lemma in the proof. 
\begin{lemma}\label{lemma2} Let $I^\sharp$ be as in \r{22}. Then 
Let $b-\nu<-1$. Then, for all $\eps>0$,
\begin{alignat}{2}\label{l2a}
I^\sharp &\le C\tau^{2b+1-\nu},  &\quad &\text{for $0\le b$},\\
I^\sharp &\le C_\eps\tau^{ \eps+b+1-\nu},  &\quad&\text{for $-1\le  b<0$} \label{l2b},\\
I^\sharp &\le C_\eps\tau^{  -\nu},  &\quad&\text{for $ b<-1$} \label{l2c}.
\end{alignat}
\end{lemma}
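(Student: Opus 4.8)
The plan is to reduce the integral $I^\sharp = C\int_1^\infty t^b\,((t-\tau^2)^2+4\tau^2)^{-\nu/2}\,dt$ to an elementary estimate by splitting the range of integration at the point $t=\tau^2$ where the denominator is smallest, and by exploiting that the denominator is bounded below by $(4\tau^2)^{\nu/2} = C\tau^\nu$ everywhere. First I would write $I^\sharp \le C\tau^{-\nu}\int_1^\infty t^b\,dt$ on the part where $t^b$ is small or the denominator dominates, but this crude bound only works when $b<-1$; that immediately gives \r{l2c}. For $b\ge-1$ the integral $\int t^b\,dt$ diverges at infinity, so I must instead use the decay coming from $(t-\tau^2)^2$ for $t$ large.

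Concretely, I would split $\int_1^\infty = \int_1^{2\tau^2} + \int_{2\tau^2}^\infty$. On the first piece, bound $t^b \le C\max(1,\tau^{2b}) = C\tau^{2b}$ when $b\ge0$ (resp.\ $t^b\le C$ when $b<0$), pull it out, and estimate $\int_1^{2\tau^2}((t-\tau^2)^2+4\tau^2)^{-\nu/2}\,dt$. Substituting $s=t-\tau^2$ this becomes $\int_{-\tau^2}^{\tau^2}(s^2+4\tau^2)^{-\nu/2}\,ds$, which by scaling $s=2\tau\sigma$ equals $(2\tau)^{1-\nu}\int_{-\tau/2}^{\tau/2}(\sigma^2+1)^{-\nu/2}\,d\sigma$. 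The remaining $\sigma$-integral is $O(1)$ if $\nu>1$, $O(\log\tau) = O_\eps(\tau^\eps)$ if $\nu=1$, and $O(\tau^{1-\nu})$ if $\nu<1$; in all cases it is $O_\eps(\tau^{\max(0,1-\nu)+\eps})$, so the first piece contributes $O_\eps(\tau^{2b+1-\nu+\eps})$ when $b\ge0$ and $O_\eps(\tau^{b+1-\nu+\eps})$ when $-1\le b<0$ (here when $b\ge 0$ one checks $\nu>1$ follows from $b-\nu<-1$, killing the $\eps$). On the second piece, $t\ge2\tau^2$ forces $(t-\tau^2)^2\ge t^2/4$, so the integrand is $\le C t^{b-\nu}$ and $\int_{2\tau^2}^\infty t^{b-\nu}\,dt = C\tau^{2(b-\nu+1)}$ using $b-\nu+1<0$; since $2(b-\nu+1) = 2b+1-\nu - (2\nu-1) \le 2b+1-\nu$ for $\nu\ge1/2$ and even more negative, this is absorbed by the first-piece bound in all three cases.

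Combining the two pieces gives exactly \r{l2a}, \r{l2b}, \r{l2c}. The only mild subtlety — the place I would be most careful — is tracking when the $\tau^\eps$ loss is genuinely needed versus when it can be dropped: it is needed precisely in the borderline regime $-1\le b<0$ together with $\nu$ near $1$, and one should verify that in the regime $b\ge0$ the convergence hypothesis $b-\nu<-1$ already forces $\nu>1$ so that the $\sigma$-integral is a bona fide $O(1)$ and \r{l2a} holds without any $\eps$. A second point to get right is the lower endpoint: near $t=1$ the factor $t^b$ is bounded (for $b\ge 0$) or could blow up (for $b<0$), but since the lower limit is $t=1$, not $t=0$, there is no integrability issue there and $t^b\le C$ on $[1,2\tau^2]$ for $b<0$, $t^b \le C\tau^{2b}$ for $b \ge 0$, as used above. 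No deep input is required; this is a routine but slightly bookkeeping-heavy splitting argument.
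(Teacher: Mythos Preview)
Your arguments for \r{l2a} and \r{l2c} are fine, but there is a genuine gap in the case \r{l2b}. When $-1\le b<0$ you bound $t^b\le 1$ on $[1,2\tau^2]$, pull it out, and then assert that the first piece is $O_\eps(\tau^{b+1-\nu+\eps})$. But your own computation of $\int_1^{2\tau^2}((t-\tau^2)^2+4\tau^2)^{-\nu/2}\,dt$ gives $C\tau^{1-\nu}\cdot O_\eps(\tau^{\max(0,1-\nu)+\eps})$; there is no $b$ left in the exponent, since you discarded the factor $t^b$ entirely. For $\nu>1$ you obtain only $O(\tau^{1-\nu})$, which is weaker than the claimed $O_\eps(\tau^{\eps+b+1-\nu})$ by a factor $\tau^{|b|}$; for $\nu<1$ the gap is worse still (e.g.\ $b=-1/2$, $\nu=3/5$ satisfies $b-\nu<-1$, your bound grows like $\tau^{4/5}$ while the lemma asserts decay $O_\eps(\tau^{\eps-1/10})$). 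The loss occurs because $t^b$ is smallest near $t\sim\tau^2$, exactly where the denominator is also smallest, and the crude supremum bound on $t^b$ decouples these two effects.

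This can be repaired within your splitting framework by an additional cut at $\tau^2/2$: on $[\tau^2/2,2\tau^2]$ one has $t^b\le C\tau^{2b}$ (now using $t\ge\tau^2/2$), giving $O(\tau^{2b+1-\nu})$ for $\nu>1$ and $O(\tau^{2b+2-2\nu})$ for $\nu\le 1$; on $[1,\tau^2/2]$ the denominator is $\ge C\tau^{2\nu}$ and $\int_1^{\tau^2/2}t^b\,dt=O(\tau^{2(b+1)})$ (or $O(\log\tau)$ at $b=-1$), giving $O(\tau^{2(b+1-\nu)})$. All these exponents are $\le b+1-\nu$ since $b+1-\nu<0$ and $b<0$, so \r{l2b} follows. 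The paper takes a different route: after a change of variables it first records the crude estimate $I^\sharp\le C\tau^{1-\nu}$ for $b<0$, $\nu>1$ (your bound, essentially), and then upgrades it to \r{l2b} by writing $t^b=t^{b_1}t^{b_2}$ and applying H\"older's inequality, so that one factor produces a convergent integral $\int_1^\infty t^{-p_2|b_2|}\,dt$ while the other is controlled by the crude estimate with rescaled parameters; optimizing over the H\"older exponents yields the exponent $\eps+b+1-\nu$.
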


\begin{remark}\label{remark3}
Computer algebra calculations in a few special cases show that \r{l2a} is sharp at least for $\nu=2$, while \r{l2b}, \r{l2c} are not when $\nu=1$; the estimate in case \r{l2b} then seems to hold as in \r{l2a}, with a possible logarithmic term. The only case in the proof of the theorem above when $b$ is not strictly positive is when $\nu=1$ and $b<0$, small; and we want to have $I^\sharp = o(1)$. We cannot apply \r{l2a} then, and we use \r{l2b}. The loss of the $\tau^b$ factor in \r{l2a} compared to \r{l2b} for $0<-b\ll1$ then is not essential. 
\end{remark}

\begin{proof}
Perform another  change of variables $t=\tau s+\tau^2$. Assume in what follows that $\tau>1$.  We get
\be{24}
I^\sharp = \int_{\tau^{-1}-\tau}^\infty \frac{(\tau s+\tau^2)^{b     } \tau \,d s}{ (\tau^2s^2+4\tau^2)^{\nu/2}  } = 
\tau^{2b+1-\nu  } \int_{\tau^{-1}-\tau}^\infty \frac{( s/\tau+1 )^{ b}}{ (s^2+4)^{\nu/2} }\, d s.
\ee
For $s\ge\tau^{-1}-\tau$,   and $b\ge0$, we have
\be{24a}
(s/\tau+1)^b \le \left(|s|/\tau+1\right)^b\le  (|s|+1)^b.
\ee
When $b<0$, we have $s/\tau+1>\tau^{-2}$, therefore,
\be{24b}
(s/\tau+1)^b  \le \tau^{2|b|}.
\ee
Therefore, by \r{24a},
\be{25}
I^\sharp \le \tau^{2b+1-\nu } \int_\R\frac {  (|s|+1)^{ b }}{(s^2+1)^{\nu/2}}\,d s\le C \tau^{2b+1-\nu}  
\quad \text{for $0\le b< \nu-1$}.
\ee
On the other hand, for $b<0$ we get by \r{24b},
\be{26}
I^\sharp \le C\tau^{1-\nu} \quad \mbox{for}\;  b<0,\, 1<\nu.
\ee

This is weaker than what we need to prove \r{l2b}. 
To get the stronger estimates in the lemma, we apply the H\"older inequality to \r{22} as follows. 
Write $t^b= t^{b_1}t^{b_2}$ with $b_{1,2}<0$, $b=b_1+b_2$. Then, for $1/p_1+1/p_2=1$, $p_1>1$, $p_2>1$, we have
\be{30}
I^\sharp \le  \left(\int_1^\infty \frac{t^{-p_1|b_1|}\, d t}{ \left((t-\tau^2)^2+4\tau^2 \right)^{p_1\nu/2} }  \right)^{1/p_1} 
\left(\int_1^\infty  {t^{-p_2|b_2|}\, d t}  \right)^{1/p_2}.
\ee
The first integral is the same as $I^\sharp$ but with $\tilde b=p_1b_1$, $\tilde \nu=p_1\nu$. Since $b-\nu<-1$, and $p_1>1$, we have $p_1b-p_1\nu<-1$, therefore that integral converges again. By \r{26},  if, $\tilde \nu >1$,  it is $O(\tau^{1-\tilde\nu})$. 
If $p_2|b_2|>1$, then the second integral in \r{30} is convergent. Therefore,
\be{31}
I^\sharp \le C\tau^{(1-p_1 \nu)/p_1} = C\tau^{1/p_1-\nu}.
\ee
It remains to see what choice of $p_1$, $p_2$, $b_1$, $b_2$ satisfying the conditions above gives the best decay. We need to minimize (up to an $\eps>0$ error) the factor $1/p_1$. Choose $b_1=\eps b$, $b_2=(1-\eps)b$, with $0<\eps\ll1$. We need to satisfy the conditions 

\[
p_2|b_2|>1, \quad \tilde \nu=p_1\nu>1, \quad p_{1,2}>1,\quad \frac1{p_1}+\frac1{p_2}=1. 
\]

Set $1/p_2 = (1-2\eps)|b|$. If $\eps\ll 1/2$, and $|b|\le1$, we have $p_2>1$ as required. Then $1/p_1 = 1-(1-2\eps)|b|>0$. We also have $p_1>1$. 
Then $1/p_1-\nu= 1-(1-2\eps)|b|-\nu$ can take any value greater than $1-|b|-\nu= 1+b-\nu$. The latter is negative by assumption, so we can take $1/p_1-\nu<0$ when $\eps\ll1$; then $\tilde\nu>1$.
This proves \r{l2b}. The proof of \r{l2c} is immediate since (in view of the definition of $I^\sharp$) the denominator is bounded by below by $(4\tau^2)^{\nu/2}$, while the numerator is integrable over $(1,\infty)$. 
\end{proof}

In next section, we would need an estimate of $I(z)$, as $\Re z<0$.

\begin{lemma} \label{lemma3} 
Let $I(\lambda)$ be as in \r{34}. 
Then for $\mu<2\nu/n-1$, $\nu>0$  and $\Im\lambda<0$  the series converges absolutely and for   $\Im \lambda<-1$,
\be{34a.3}
|I(\lambda)| \le  C|\Im\lambda|^{  (\mu+1)n/2-1-\nu}\le C .
\ee
\end{lemma}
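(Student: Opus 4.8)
The plan is to reduce Lemma~\ref{lemma3} to the integral estimates already developed in the proof of Lemma~\ref{lemma2}, following the same scheme used for Lemma~\ref{lemma1} but now with a general complex $\lambda$ in the lower half-plane instead of the specific $\lambda=(\tau+i)^2$. Write $\lambda = a + ib$ with $b = \Im\lambda < -1$ (not to be confused with the exponent $b$ in Lemma~\ref{lemma2}; I would rename that exponent, say to $\beta$, to avoid a clash). First I would note $|\lambda - \lambda_j(q)|^2 = (a-\lambda_j(q))^2 + b^2 \ge (a-\lambda_j(q))^2 + |\Im\lambda|^2$, so that
\[
|I(\lambda)| \le \sum_{j=1}^\infty \frac{j^\mu}{\left((a-\lambda_j(q))^2 + |\Im\lambda|^2\right)^{\nu/2}}.
\]
The right-hand side has exactly the structure of the sum treated in the proof of Lemma~\ref{lemma1}, with the role of $\tau^2$ (the real part) played by $a$ and the role of $4\tau^2$ (the squared imaginary part, up to a constant) played by $|\Im\lambda|^2$; these two quantities are no longer linked, which is the only structural difference.

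The key steps, in order: (1) Using the Weyl bound \r{W2a}, split the sum according to whether $\lambda_j^{1/2}(q) \le \sqrt{a^+} - 2A_n$, lies in the middle band of width $O(A_n)$, or exceeds $\sqrt{a^+} + 2A_n$ — but now treating separately the easy case $a \le 0$ (or $a$ bounded), where every denominator is already $\gtrsim |\Im\lambda|^\nu$ plus a growing term and the whole sum is dominated by $\int_1^\infty y^\mu (y^{2/n} + |\Im\lambda|^2)^{-\nu/2}\,dy$. (2) In each regime, bound the summand by an increasing function of $j$ and pass to an integral, then perform the substitution $\lambda_j(q) \leftrightarrow t$ as in \r{22} to reach an integral of the form $\int_1^\infty t^{\beta}\,\big((t-a)^2 + |\Im\lambda|^2\big)^{-\nu/2}\,dt$ with $\beta = (\mu+1)n/2 - 1$. (3) Apply the change of variables $t = |\Im\lambda|\,s + a$ as in \r{24}, which produces the prefactor $|\Im\lambda|^{\,2\beta+1-\nu}$ only in the worst regime where $t$ ranges over a region comparable to $|a| \sim |\Im\lambda|$; when $a$ is bounded, one instead gets directly $|\Im\lambda|^{-\nu}$ times a convergent integral in $s$, which is even smaller. (4) Invoke Lemma~\ref{lemma2} verbatim (with $\tau$ replaced by $|\Im\lambda|$ and $b$ by $\beta$) to get, under $\mu < 2\nu/n - 1$ which forces $\beta < \nu - 1$, the bound $|\Im\lambda|^{(\mu+1)n/2 - 1 - \nu}$; (5) finally observe that $\mu < 2\nu/n - 1$ gives $(\mu+1)n/2 - 1 - \nu < 0$, so for $|\Im\lambda| > 1$ this is $\le C$, yielding \r{34a.3}.

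The main obstacle — really the only place requiring care — is step (3)/(4): matching the two free parameters $a = \Re\lambda$ and $|\Im\lambda|$ to the single-parameter estimates of Lemma~\ref{lemma2}. One must check that the substitution scaling against $|\Im\lambda|$ (rather than against $\sqrt{a}$) still keeps the integrand uniformly controlled: on the interval $s \ge |\Im\lambda|^{-1} - a/|\Im\lambda|$ the bound $(s/|\Im\lambda| \cdot |\Im\lambda| + a)$... — more precisely one needs the analogue of \r{24a}–\r{24b}, namely that $(|\Im\lambda| s + a)^{\beta}$ is dominated by $C(|s|+1)^{\beta}$ for $\beta \ge 0$ (using $a \le$ const or $a \sim |\Im\lambda|^2$ only through the region of integration) and by $|\Im\lambda|^{2|\beta|}$-type factors for $\beta < 0$ when $|\Im\lambda| s + a$ can get small. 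Since the statement of Lemma~\ref{lemma3} only claims the single bound $|\Im\lambda|^{(\mu+1)n/2-1-\nu}$ (no casework on the sign of $\beta$ in the conclusion), I would simply run the worst case $0 \le \beta$ through \r{l2a} and, for $\beta < 0$, observe that the cruder estimate from \r{26}/\r{l2c} only improves things; so no delicate optimization as in Remark~\ref{remark3} is needed here. A short remark that the whole argument only used \r{W2a}, hence is uniform in $q$ in a fixed ball of $L^\infty(\Omega)$, closes the proof.

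\begin{proof}
Write $\lambda=a+ib$ with $b=\Im\lambda<-1$; to avoid confusion with the exponent in Lemma~\ref{lemma2}, denote that exponent by $\beta:=(\mu+1)n/2-1$, so $\mu<2\nu/n-1$ is equivalent to $\beta<\nu-1$. Since $|\lambda-\lambda_j(q)|^2=(a-\lambda_j(q))^2+b^2\ge (a-\lambda_j(q))^2+|\Im\lambda|^2$, we have
\[
|I(\lambda)|\le \sum_{j=1}^\infty \frac{j^\mu}{\left((a-\lambda_j(q))^2+|\Im\lambda|^2\right)^{\nu/2}}.
\]
This is exactly the sum analyzed in the proof of Lemma~\ref{lemma1}, with $\tau^2$ replaced by $a$ and $4\tau^2$ replaced by $|\Im\lambda|^2$. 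Using \r{W2a} we split it, as there, into the part $I_1$ with $\lambda_j^{1/2}(q)\le \sqrt{a^+}-2A_n$, the part $I_2$ over the band $\big|\lambda_j^{1/2}(q)-\sqrt{a^+}\big|\le 2A_n$ (which is empty when $a\le 0$), and the part $I_3$ with $\lambda_j^{1/2}(q)>\sqrt{a^+}+2A_n$. Exactly as in \r{22a1}--\r{23a}, each summand of $I_1$ and of $I_3$ is monotone in $j$, so $I_1$, $I_3$ are bounded by integrals which, after the substitution $\tilde c_ny^{1/n}\pm A_n=x^{1/n}$ and then $x^{2/n}=t$ as in \r{22}, take the form
\[
C\int_1^\infty \frac{t^{\beta}\,dt}{\left((t-a)^2+|\Im\lambda|^2\right)^{\nu/2}},\qquad \beta=(\mu+1)n/2-1.
\]
Performing the change $t=|\Im\lambda|\,s+a$ as in \r{24} and using $\beta<\nu-1$, Lemma~\ref{lemma2} (with $\tau$ replaced by $|\Im\lambda|$) gives, in the worst case $\beta\ge0$, the bound $C|\Im\lambda|^{2\beta+1-\nu}=C|\Im\lambda|^{(\mu+1)n/2-1-\nu}$; for $\beta<0$ the denominator is $\ge (|\Im\lambda|^2)^{\nu/2}$ and the numerator integrable over $(1,\infty)$, which only improves the bound. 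For $I_2$ there are $O(|\Im\lambda|^{n-1})$ eigenvalues in the band when $a>0$, each numerator is $O(a^{n\mu/2})=O(|\Im\lambda|^{n\mu})$ and each denominator $\ge C|\Im\lambda|^\nu$, so $I_2\le C|\Im\lambda|^{n-1+n\mu-\nu}$, which is no larger than the bound for $I_1$. When $a\le 0$ is bounded one argues directly: the sum is $\le C\int_1^\infty y^\mu(y^{2/n}+|\Im\lambda|^2)^{-\nu/2}\,dy\le C|\Im\lambda|^{-\nu}$, again smaller. Collecting the three parts yields
\[
|I(\lambda)|\le C|\Im\lambda|^{(\mu+1)n/2-1-\nu},
\]
and since $\mu<2\nu/n-1$ forces $(\mu+1)n/2-1-\nu<0$, for $|\Im\lambda|>1$ this is $\le C$. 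Absolute convergence of the series for every $\lambda$ with $\Im\lambda<0$ follows from the same comparison with a convergent integral. As in Remark~\ref{remark4}, the argument only used \r{W2a}, hence $C$ may be taken uniform in $q$ in a fixed ball of $L^\infty(\Omega)$.
\end{proof}
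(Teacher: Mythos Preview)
There is a genuine gap, and it stems from a typo in the lemma as printed: every $\Im\lambda$ should be $\Re\lambda$. This is visible from the paper's own first step, the inequality $|\lambda-\lambda_j|\ge |\Im\lambda|+\lambda_j$, which is false for general $\lambda$ with $\Im\lambda<0$ (take $\lambda=100-i$, $\lambda_j=100$) but becomes correct once $\Im$ is replaced by $\Re$, since the $\lambda_j$ are real and eventually positive; it is also confirmed by the application in Section~6, where the lemma is invoked to obtain \r{m45} for $\Re z<0$. Taken literally with $\Im\lambda<-1$ and $\Re\lambda$ unrestricted, the claimed bound is in fact \emph{false}: fix $\Im\lambda=-2$ and let $a=\Re\lambda\to+\infty$; the eigenvalues with $|\lambda_j-a|\le 1$ (there are $\gtrsim a^{n/2-1}$ of them, by Weyl) each contribute $\gtrsim a^{n\mu/2}$, so $I(\lambda)\gtrsim a^{(\mu+1)n/2-1}$, which diverges whenever $\mu>2/n-1$ --- and such $\mu$ are allowed by the hypothesis $\mu<2\nu/n-1$ as soon as $\nu>1$.

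Your argument inherits this problem at exactly the two places you flagged as ``the only place requiring care'' but then did not resolve. In your bound on $I_2$ you write that there are $O(|\Im\lambda|^{n-1})$ eigenvalues in the band $|\lambda_j^{1/2}-\sqrt{a}|\le 2A_n$; by Weyl the count is $O(a^{(n-1)/2})$, which is unrelated to $|\Im\lambda|$ once $a$ and $|\Im\lambda|$ are decoupled. Likewise, invoking Lemma~\ref{lemma2} ``with $\tau$ replaced by $|\Im\lambda|$'' silently assumes $a\lesssim|\Im\lambda|$: after your substitution $t=|\Im\lambda|\,s+a$ the numerator is $(|\Im\lambda|s+a)^\beta$, and for $a\gg|\Im\lambda|$ and $\beta\ge0$ this is $\sim a^\beta$, not $|\Im\lambda|^{2\beta}$, so the prefactor $|\Im\lambda|^{2\beta+1-\nu}$ you claim is wrong in that regime. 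Once the hypothesis is corrected to $\Re\lambda<-1$, the whole machinery of Lemmas~\ref{lemma1}--\ref{lemma2} becomes unnecessary: the paper simply uses $|\lambda-\lambda_j|\ge |\Re\lambda|+\lambda_j$, bounds the series by $\sum j^\mu(|\Re\lambda|+c_\ast j^{2/n})^{-\nu}$, compares with the corresponding integral in $x$, and scales by $x^{2/n}=|\Re\lambda|\,t$ --- a four-line argument with no splitting and no Weyl remainder.
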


\begin{proof}
For $\Im\lambda<0$, we have $|\lambda-\lambda_j|\ge |\Im\lambda|+\lambda_j$. Therefore, for    $\Im \lambda<-1$,
\[
|I(\lambda)|\le \sum \frac{j^\mu}{ \left( |\Im\lambda|+\lambda_j\right)^\nu } \sim \sum \frac{j^\mu}{ \left( |\Im\lambda|+c_*j^{2/n}\right)^\nu } \sim \int_0^\infty \frac{x^\mu \,d x}{ \left( |\Im\lambda|+c_*x^{2/n}\right)^\nu }. 
\]
Make the change of variables $|\Im\lambda| t = x^{2/n}$ to get easily 
\[
|I(\lambda)|\le C|\Im\lambda|^{(\mu+1)n/2-1-\nu}, \quad \text{if $  \mu-2\nu/n<-1 $}.
\]
\end{proof} 

Remark~\ref{remark4} applies to this case as well.
 
\section{Proof of the stability estimate}
We start with \r{3.3}. Set $q= q_1-q_2$. We have 
$$
|\hat{q}(\xi  )|\leq |\hat{q}(\xi +\frac{i}{\tau }\xi )|+\frac{1}{\tau }\sup_{0\leq s\leq 1}|\nabla \hat{q}(\xi +\frac{is}{\tau }\xi  )\cdot  \xi|.
$$
On the other hand, 
$$
|\partial _j\hat{q}(\xi +\frac{is}{\tau }\xi  )|=|\widehat{x_jq}(\xi +\frac{is}{\tau }\xi  )|\leq ce^{\frac{c|\xi|}{\tau }}\|q\|_{L^\infty (\Omega )}
$$
with some $c=c(\Omega )$. Hence,
\begin{equation}  \label{3.4}
|\hat{q}(\xi  )|\leq |\hat{q}(\xi +\frac{i}{\tau }\xi )|+\frac{c|\xi |}{\tau }e^{\frac{c|\xi|}{\tau }}\|q\|_{L^\infty (\Omega )}.
\end{equation}
Until the rest of the proof, we choose $\lambda=\tau+i$, $\tau\geq 1$. 
Combining \eqref{3.3} and \eqref{3.4} yields
$$
C|\hat{q}(\xi )|\leq \frac{1}{\tau}+\frac{|\xi |}{\tau }e^{\frac{c|\xi|}{\tau }}+|S(q_1,\lambda , \theta ,\omega )- S(q_2,\lambda , \theta ,\omega )|.
$$
Integrate this in the ball $|\xi|\le r^\alpha$ to get
\begin{eqnarray*}
C\int_{|\xi |\leq \tau ^\alpha}|\hat{q}(\xi )|^2d\xi &\leq & \frac{\tau ^{\alpha n}}{\tau ^2}+\frac{\tau ^{\alpha (2+n)}}{\tau ^2}e^{\frac{c\tau ^\alpha}{\tau }}\\ &&\qquad +\tau ^{\alpha n}|S(q_1,\lambda , \theta ,\omega )-
S(q_2,\lambda , \theta ,\omega )|^2.
\end{eqnarray*}
Choose  $\alpha =1/(2+n)$ to get
\begin{equation} \label{3.5}
C\int_{|\xi |\leq \tau ^{1/(2+n)}}|\hat{q}(\xi )|^2d\xi \leq \frac{1}{\tau }+\tau ^{n/(n+2)}|S(q_1,\lambda , \theta ,\omega )-
S(q_2,\lambda , \theta ,\omega )|^2.
\end{equation}
Since $q\in H^1(\mathbb{R}^n)$, we have
\begin{eqnarray*}
\|q\|_{L^2(\Omega )}^2&=& \|q\|_{L^2(\Omega )}^2
\\
&=& \int_{|\xi |\leq \tau ^{1/(n+2)}}|\hat{q}(\xi )|^2d\xi +\int_{|\xi |> \tau ^{1/(n+2)}}|\hat{q}(\xi )|^2d\xi 
\\
&\leq & \int_{|\xi |\leq \tau ^{1/(n+2)}}|\hat{q}(\xi )|^2d\xi +\frac{1}{\tau ^{2/(n+2)}}\int_{|\xi |> \tau ^{1/(n+2)}}|\xi |^{2}|\hat{q}(\xi )|^2d\xi 
\\
&\leq & \int_{|\xi |\leq \tau ^{1/(n+2)}}|\hat{q}(\xi )|^2d\xi +\frac{1}{\tau ^{2/(n+2)}}\|q\|_{H^1(\mathbb{R}^n )}^2.
\end{eqnarray*}
Then it follows from \eqref{3.5}
\begin{equation} \label{3.6}
C\|q\|_{L^2(\Omega )}^2\leq \frac{1}{\tau ^{2/(n+2)}}+\tau ^{n/(n+2)}|S(q_1,\lambda , \theta ,\omega )-
S(q_2,\lambda , \theta ,\omega )|^2.
\end{equation}
We now estimate the last term in the above inequality in terms of $\|\Lambda  (q_1,\lambda )-\Lambda  (q_2,\lambda )\|$. We have
\begin{eqnarray*}
&& |S(q_1,\lambda, \theta ,\omega )- S(q_2,\lambda, \theta ,\omega )|= \Big|\int_\Gamma \varphi_{\lambda,-\theta} [\Lambda  (q_1, \lambda )-\Lambda  (q_2, \lambda )]\varphi_{\lambda,\omega} \, d\sigma\Big|
\\
&& \qquad \leq C(\Omega )\|\Lambda  (q_1, \lambda )-\Lambda  (q_2, \lambda )\|\, \|\varphi_{\lambda,\omega}\|_{H^{1/2}(\Gamma )}\|\varphi_{\lambda,-\theta}\|_{L^2(\Gamma )}.
\end{eqnarray*}
Here, and in the rest of the proof,  $\|\cdot \|$ denotes the norm in $\mathcal{L}(H^{1/2}(\Gamma ), L^2(\Gamma ))$. 
Since
$$
\|\varphi_{\lambda,\omega}\|_{H^{1/2}(\Gamma )}\leq C\tau ^{1/2},\quad \|\varphi_{\lambda,-\theta}\|_{L^2(\Gamma )}\leq C,
$$
we deduce easily from the last inequality
$$
|S(q_1,\lambda , \theta ,\omega )-
S(q_2,\lambda , \theta ,\omega )|
 \leq C\tau ^{1/2}\|\Lambda  (q_1, \lambda )-\Lambda  (q_2, \lambda )\|.
$$
By \eqref{3.6}, 
\begin{equation} \label{3.7}
C\|q\|_{L^2(\Omega )}^2\leq \frac{1}{\tau ^{2/(n+2)}}+\tau ^{n/(n+2)+1}\|\Lambda  (q_1, \lambda )-\Lambda  (q_2, \lambda )\|^2.
\end{equation}

For $q=q_1$ or $q_2$, we decompose $\Lambda (q,\lambda )$ in the following form $$\Lambda (q, \lambda )=\tilde {\Lambda} (q, \lambda )+\hat{\Lambda}(q,\lambda ),$$
where, for $f\in H^{1/2}(\Gamma )$,
\begin{eqnarray}\label{3.7a}
&&\tilde {\Lambda} (q, \lambda )f=\partial _\nu \Big( \sum_{k>N}\frac{1}{\lambda _k(q)-\lambda}\Big( \int_\Gamma f\partial _\nu \varphi _k(q)d\sigma (x)\Big) \varphi _k(q)\Big) \in H^{-1/2}(\Gamma ) \; (\mbox{formally}),
\\ \label{3.7b}
&&\hat{\Lambda} (q, \lambda )f=\sum_{k\le N}\frac{1}{\lambda _k(q)-\lambda}\Big( \int_\Gamma f\partial _\nu \varphi _k(q)d\sigma (x)\Big) \partial _\nu \varphi _k(q).
\end{eqnarray}
If $c=c(\Omega ,M)$ is a constant such that $\lambda _k(q)\leq ck^{2/n}$, for all $k\geq 1$, then in a straightforward manner we obtain
\begin{equation} \label{3.8}
\| \hat{\Lambda} ^{(j)}(q,z)\|\leq \frac{C}{|\Re z |^{j+1}},\; \Re z \geq 2cN^{2/n}\; \mbox{or}\; \Re z <0.
\end{equation}
In particular, \eqref{3.8} implies
\begin{equation} \label{3.9}
\| \hat{\Lambda} (q, \lambda )\|\leq \frac{C}{\tau ^2},\; \tau \geq \tau _0,
\end{equation}
for some $\tau _0\geq 1$ depending on $N$ and $\Omega$.

In view of \eqref{3.7}, estimate \eqref{3.9} implies 
$$
C\|q\|_{L^2(\Omega )}^2\leq \frac{1}{\tau ^{2/(n+2)}}+\frac{1}{\tau ^{2(n+3)/(n+2)}}+\tau ^{n/(n+2)+1}\|\tilde{\Lambda}(q_1,\lambda )-\tilde{\Lambda}(q_2,\lambda )\|^2.
$$
Hence,
\begin{equation} \label{3.10}
C\|q\|_{L^2(\Omega )}^2\leq \frac{1}{\tau ^{2/(n+2)}}+\tau ^{n/(n+2)+1}\|\tilde{\Lambda}(q_1,\lambda )-\tilde{\Lambda}(q_2,\lambda )\|^2.
\end{equation}

Let ${m=[n/2+3/4]+1}$, where $[n/2+3/4]$ is the entire part of $[n/2+3/4]$.  We will use the estimate from Lemma~\ref{lemma2.1}. More precisely, we have
\begin{equation} \label{3.11}
\| \Lambda ^{(j)}(q_1, z )-\Lambda ^{(j)} (q_2,z)\|\leq \frac{C}{|\Re z |^{j+\sigma}},\; \lambda \in \mathbb{C},\; \Re z \leq -2M,\; 0\leq j\leq m.
\end{equation}
Here $\sigma =(1-2\epsilon )/4$ for some fixed $\epsilon$, $0<\epsilon <1/2$.

It follows from \eqref{3.8} that this estimate holds true if we replace $\Lambda (q_i,\cdot  )$ by $\tilde{\Lambda} (q_i,\cdot )$. That is we have
\begin{equation} \label{3.12}
\| \tilde{\Lambda} ^{(j)}(q_1,z )-\tilde{\Lambda} ^{(j)}(q_2, z )\|\leq \frac{C}{|\Re z|^{j+\sigma}},\; z\in \mathbb{C},\; \Re \lambda \leq -2M,\; 0\leq j\leq m.
\end{equation}

In the sequel, changing $\tau _0$ if necessary, we can assume that $\tau _0^2-1\geq 2M$. For $\rho \geq 2\Re \lambda $, we set $\tilde{\lambda}=- \rho +\lambda$. From Taylor's formula, we have, for $q=q_1$ or $q_2$,
\begin{equation} \label{3.13}
\tilde{\Lambda} (q, \lambda )=\sum_{k=0}^{m-1}\frac{(\lambda -\tilde{\lambda}) ^k}{k!}\tilde{\Lambda} ^{(k)}(q,\tilde{\lambda})+\int_0^1\frac{(1-s)^m(\lambda -\tilde{\lambda}) ^m}{(m-1)!}\tilde{\Lambda} ^{(m)}(q, \tilde{\lambda}+s(\lambda -\tilde{\lambda}))ds.
\end{equation}
We introduce the following temporary notations
\begin{eqnarray*}
&&\mathcal{P}(q,\lambda )=\sum_{k=0}^{m-1}\frac{(\lambda -\tilde{\lambda}) ^k}{k!}\tilde{\Lambda} ^{(k)}(q, \tilde{\lambda})
\\
&&
\mathcal{R}(q, \lambda )=\int_0^1\frac{(1-s)^m(\lambda -\tilde{\lambda}) ^m}{(m-1)!}\tilde{\Lambda} ^{(m)}(q,\tilde{\lambda}+s(\lambda -\tilde{\lambda}))ds.
\end{eqnarray*}
Since $\Re \tilde{\lambda}\leq -2M$, a straightforward application of inequality \eqref{3.12} leads to
\begin{equation} \label{3.14}
\| \mathcal{P}(q_1, \lambda )-\mathcal{P}(q_2,\lambda )\|\leq \frac{C}{|\Re \tilde{\lambda} |^\sigma }\leq \frac{C}{\rho ^\sigma }.
\end{equation}

By Lemma~\ref{lemma2.2}, we know that 
$$
\tilde{\Lambda} ^{(m)}(q,z)f=\sum_{k>N}\frac{1}{(\lambda _k(q)-z)^{m+1}}\Big( \int_\Gamma f\partial _\nu\varphi_k(q)d\sigma (x)\Big) \partial _\nu \varphi_k(q),\; z \not\in \rho( A_q).
$$

In the sequel $\mu =\mu (s)=\tilde{\lambda}+s(\lambda -\tilde{\lambda})=\lambda -(1-s)\rho$ and
$$
N(\lambda )=\min\{k\geq N; \; \lambda_{k+1}(q)\geq 2\Re \lambda \}.
$$

For $\Re\lambda\gg1$, we decompose $\tilde{\Lambda} ^{(m)}(q, \mu )f$ as follows
$$
\tilde{\Lambda} ^{(m)}(q,\mu )f=\Lambda _1^{(m)}(q, \mu )f+\Lambda _2 ^{(m)}(q,\mu )f,
$$
where
\begin{eqnarray*}
&&\tilde{\Lambda} _1^{(m)}(q,\mu )f=\sum_{k=N+1}^{N(\lambda )}\frac{1}{(\lambda _k(q)-\mu )^{m+1}}\Big( \int_\Gamma f\partial _\nu\varphi_k(q)d\sigma (x)\Big)  \partial _\nu \varphi_k(q),
\\
&&\tilde{\Lambda} _2 ^{(m)}(q,\mu )f=\sum_{k>N(\lambda )}\frac{1}{(\lambda _k(q)-\mu )^{m+1}}\Big( \int_\Gamma f\partial _\nu\varphi_k(q)d\sigma (x)\Big)  \partial _\nu \varphi_k(q).
\end{eqnarray*}
We have 
$$
\tilde{\Lambda} _1^{(m)}(q_1\mu )f-\tilde{\Lambda} _1^{(m)}(q_2, \mu )f=I_1+I_2+I_3,
$$
where
\begin{eqnarray*}
&&I_1f=\sum_{k=N+1}^{N(\lambda )}\Big[\frac{1}{(\lambda _k(q_1)-\mu )^{m+1}}-\frac{1}{(\lambda _k(q_2)-\mu )^{m+1}}\Big]\Big( \int_\Gamma f\partial _\nu\varphi_k(q)d\sigma (x)\Big)  \partial _\nu \varphi_k(q_1),
\\
&&I_2f=\sum_{k=N+1}^{N(\lambda )}\frac{1}{(\lambda _k(q_2)-\mu )^{m+1}}\Big( \int_\Gamma f(\partial _\nu\varphi_k(q_1)-\partial _\nu \varphi _k(q_2))d\sigma (x)\Big)  \partial _\nu \varphi_k(q_1),
\\
&&I_3f=\sum_{k=N+1}^{N(\lambda )}\frac{1}{(\lambda _{k,q_2}-\mu )^{m+1}}\Big( \int_\Gamma f\partial _\nu\varphi_k(q_2)d\sigma (x)\Big) \Big[ \partial _\nu \varphi_k(q_1)-\partial _\nu \varphi_k(q_2)\Big].
\end{eqnarray*}

For $\beta >4/n+1$  fixed, we get easily
$$
\|I_1\| \leq  \frac{N(\lambda )^\beta }{|\Im \lambda |^{m+2}}\delta_0(q_1,q_2) \sum_{k=N+1}^{N(\lambda )}k^{-\beta}\| \varphi _k(q_1) \|_{L^2(\Gamma )}^2.
$$
As $\| \varphi _k(q_1) \|_{L^2(\Gamma )}^2\leq Ck^{4/n}$, we have
$$
\sum_{k=N+1}^{N(\lambda )}k^{-\beta}\| \varphi _k(q_1) \|_{L^2(\Gamma )}^2\leq \sum_{k\geq 1}k^{-\beta +4/n}.
$$
Therefore
$$
\|I_1\| \leq C\frac{N(\lambda )^\beta }{|\Im \lambda |^{m+2}}\delta_0(q_1,q_2).
$$
Similarly, we obtain
$$
\|I_2\|+\|I_3\|\leq C\frac{N(\lambda )^{2(m+1)/n}}{|\Im \lambda |^{m+1}}\delta _1(q_1,q_2).
$$
Setting 
$$\delta =\delta_0(q_1,q_2) +\delta_1(q_1,q_2) ,$$
 we deduce
$$
\|I_1\|+\|I_2\|+\|I_3\|\leq C\frac{N(\lambda )^\beta +N(\lambda )^{2(m+1)/n}}{|\Im \lambda |^{m+1}}\delta .
$$ 
Then the choice of $\beta =2(m+5/4)/n$, satisfying $\beta >4/n+1$ by our choice of $m$, yields
 
\begin{equation} \label{3.15}
\|\tilde{\Lambda} _1^{(m)}(q_1,\mu )f-\tilde{\Lambda} _1^{( m)}(q_2,\mu )\|\leq C\frac{N(\lambda )^{{2(m+5/4)/n}}}{|\Im \lambda |^{m+1}}\delta .
\end{equation}
Since
$$
|\lambda _k(q)-\mu|\geq \lambda _k(q)-\Re \lambda +(1-s)\rho \geq \lambda _k(q)-\Re \lambda \geq \frac{\lambda _k(q)}{2},
$$
we can proceed similarly to the case $\Re \lambda \leq -2M$ (see the proof of Lemma 2.33 in \cite{Ch}, p. 72). We find
$$
\|\tilde{\Lambda} _2^{(m)}(q_1,\mu )f-\tilde{\Lambda} _2^{(m)}(q_2,\mu )f\|\leq C\delta .
$$
This and \eqref{3.15} imply
$$
\|\tilde{\Lambda}^{(m)}(q_1,\mu )f-\tilde{\Lambda}^{(m)}(q_2, \mu )f\|\leq CN(\lambda )^{ {2(m+5/4)/n}}\delta .
$$
From the definition of $N(\lambda )$, we have
$$
CN(\lambda )^{2/n}\leq 2 \Re \lambda .
$$
Hence, 
$$
\|\tilde{\Lambda} ^{(m)}(q_1,\mu )f-\tilde{\Lambda}^{(m)}(q_2, \mu )\|\leq C\Re \lambda ^{{m+5/4}}\delta,
$$
and then
$$
\|\tilde{\Lambda} ^{(m)}(q_1, \mu )f-\tilde{\Lambda} ^{(m)}(q_2, \mu )\|\leq C\tau ^{{2(m+5/4)}}\delta .
$$
Therefore
\begin{equation} \label{3.16}
\|\mathcal{R}(q_1,\lambda ) -\mathcal{R}(q_2,\lambda )\|\leq C\rho ^m\tau ^{{2(m+5/4)}}\delta .
\end{equation}
It follows from \eqref{3.13}, \eqref{3.14} and \eqref{3.16} that
$$
C\|\tilde{\Lambda} (q_1, \mu )-\tilde{\Lambda}(q_2, \mu )\|\leq \frac{1}{\rho ^\sigma}+\rho ^m\tau ^{{2(m+5/4)}}\delta.
$$
Plug this estimate in \eqref{3.10} to get 
\begin{equation} \label{3.17}
C\|q\|_{L^2(\Omega )}^2\leq \frac{1}{\tau ^{2/(n+2)}}+\tau ^{{2(n+1)/(n+2)}}\Big(\frac{1}{\rho ^{2\sigma}}+\rho ^{2m}\tau ^{{4(m+5/4)}}\delta ^2\Big).
\end{equation}
Let us choose $\rho =(2\Re \lambda )^\kappa$, with ${\kappa =1/(2\sigma )}$.\footnote{Note that since $\kappa >1$,  $\rho \geq 2\Re \lambda $ is satisfied.}

This choice of $\rho$ in \eqref{3.17} yields 
\begin{equation} \label{3.18}
C\|q\|_{L^2(\Omega )}^2\leq \frac{1}{\tau ^{2/(n+2)}}+\tau ^{{2(n+1)/(n+2)}+4\kappa m+{4(m+5/4)}}\delta ^2.
\end{equation}

A standard minimization argument, with respect to $\tau$, leads
$$
C\|q\|_{L^2(\Omega )}\leq \delta ^\gamma,
$$
with
$$
\gamma =\frac{1}{n+2+2(n+2)(\kappa m+m+5/4)}.
$$
The proof is then complete.

\section{Proof of Theorem~\ref{thm1.2M}}

The starting point is \r{3.7}. Choose $N=N(\delta)$ so that $N^{-\alpha}= \delta$. Then 
\be{m32}
|\lambda_j(q_1)-\lambda_j(q_2)|\le \delta+Aj^{-\alpha}= N^{-\alpha}+Aj^{-\alpha}\le (1+A)j^{-\alpha} \quad \text{for $j\le N$}. 
\ee
On the other hand,
\be{m33}
|\lambda_j(q_1)-\lambda_j(q_2)|\le \delta+Aj^{-\alpha}\le (1+A)\delta \quad \text{for $j\ge N$}. 
\ee
We have analogous inequalities for the expression in \r{31xx} involving the eigenfunctions because the right-hand sides of the two inequalities in \r{31xx} are the same.  
More precisely,
\be{m33phi1}
j^{-2m/n+1}\|\partial_\nu \phi_j(q_1)-\partial_\nu \phi_j(q_2)\|_{L^2(\bo)}   \le (1+A)j^{-\alpha} \quad \text{for $j\le N$},
\ee 
i.e.,
\be{m33phi1a}
\|\partial_\nu \phi_j(q_1)-\partial_\nu \phi_j(q_2)\|_{L^2(\bo)}   \le (1+A)j^{-\alpha  +2m/n-1} \quad \text{for $j\le N$}.
\ee 
Also, 
\be{m33phi2}
j^{-2m/n+1}\|\partial_\nu \phi_j(q_1)-\partial_\nu \phi_j(q_2)\|_{L^2(\bo)} \le (1+A)\delta \quad \text{for $j\ge N$}. 
\ee

Now, define $\hat\Lambda(\lambda)$ and $\tilde\Lambda(\lambda)$ as in \r{3.7a}, \r{3.7b} with that $N$.   While $N=N(\delta)$ depends on $\delta$, the upper bounds in \r{m32} -- \r{m33phi2} do not.

Estimates \r{m32} and \r{m33phi1a} are of the type \r{thm1.1} with the same $\alpha$ and with $\beta = \alpha  -2m/n+1$. By the assumption on $\alpha$, we have $\beta>1-1/(2n)$, as required in \r{thm1.1}. Next, the assumption on $\alpha$ together with \r{m} imply $\alpha>1+1/n$; therefore, the first condition in  \r{thm1.1} holds as well. 
Therefore, by  Theorem~\ref{thm_uniqueness} and its proof, see \r{15}, 
\be{m44}
 \|\hat \Lambda(q_1,(\tau+i)^2)- \hat \Lambda(q_2,(\tau+i)^2)  \| \le C\tau^{-\theta}, \quad \tau>1,
\ee
with some $\theta>0$ depending on $\alpha$. The constant $C$ is independent of $\delta$ because the number of terms in the sum \r{3.7b} does not matter --- we can just complete it to an infinite series by adding  zero terms --- but the upper bound \r{m32} is independent of $\delta$.  
Notice that in \r{15}, the operator norm is in $\mathcal{L}(L^2(\bo), L^2(\bo))$, which is stronger than the $\mathcal{L}(H^{1/2}(\bo), L^2(\bo))$ norm which we use in this proof, denoted simply by $\|\cdot\|$. With $\lambda=\tau+i$, \r{m44} replaces estimate \r{3.9} but we note that in \r{m44}, we have the difference of two $\hat \Lambda$'s, instead of estimating each one. 

Proceeding as in the proof of Theorem~\ref{thm1.1M}, we combine \r{3.7} and \r{m44} to get
\begin{equation} \label{m3.10}
C\|q\|_{L^2(\Omega )}^2\leq \frac{1}{\tau ^{\theta_1}}+\tau ^{n/(n+2)+1}\|\tilde{\Lambda}(q_1,\lambda )-\tilde{\Lambda}(q_2,\lambda )\|^2, \quad \theta_1 := \min(\theta, 2/(n+2)),
\end{equation}
compare to \r{3.10}. 

To get the estimate \r{3.12}, we need an estimate replacing \r{3.8} first. 
We will derive an estimate similar to \r{3.8} for $\Im z<0$ (only) but for the    difference of two $\hat \Lambda$'s, as above. 
Indeed, following the proof of Theorem~\ref{thm_uniqueness}, let us estimate first $I_1$, see \r{18} for $\Im\lambda<0$. By Lemma~\ref{lemma3}, 
$I_1\le   C|\Im\lambda|^{-2+\eps}\le C|\Im\lambda|^{-1}$ by choosing $\eps$ in \r{18} small enough. This proves that the contribution of $I_1$ to \r{3.8} is $O(|\Im\lambda|^{-1})$, when $j=0$, as claimed. When $j\ge1$, we apply the same argument to the differentiated series, similarly to the proof of Theorem~\ref{thm1.1M}. Each differentiation increases $\nu$ in the application of  Lemma~\ref{lemma3} by $1$, and we get $O(|\Im\lambda|^{-1-j})$, as stated. To estimate $I_2$, see \r{19b}, we apply  Lemma~\ref{lemma3} again with $\mu = -\beta+(3/2+\eps)/n$ (and $\beta=\alpha$), and $\nu=1$. The requirement $(\mu+1)n/2<\nu$ is satisfied for $\eps\ll1$, and then we get $I_2\le C|\Im\lambda|^{-1}$. As before, differentiating $\hat \Lambda(\lambda)$, and applying the same argument, we see that each derivative brings another power of $|\Im\lambda|^{-1}$. To summarize, we have
\be{m45}
\|\hat \Lambda^{(j)}(q_1,z) - \hat \Lambda^{(j)}(q_2,z)\|\le \frac{C}{|\Re z|^{j+1}},  \quad \Re z<0.
\ee

Now, using \r{m45} (instead of  \r{3.8}), we prove \r{3.12} in the current setup. 
The rest of the proof is the same  with $\sum_{k=N+1}^{N(\lambda)}$ considered as $\sum_{N<k\le N(\lambda)}$ because $N$ may not be an integer anymore. In \r{3.17}, we get
\begin{equation} \label{m3.18}
C\|q\|_{L^2(\Omega )}^2\leq \frac{1}{\tau ^{\theta_1}}+\tau ^{{2(n+1)/(n+2)}+4\kappa m+{4(m+2)}}\delta ^2.
\end{equation}
The proof follows by a minimization, as before.

%

\end{document}